\numberwithin{equation}{section}
\newtheorem{theorem}{Theorem}
\newtheorem{lemma}{Lemma}[section]
\newtheorem{proposition}{Proposition}[section]
\newtheorem{definition}{Definition}
\newtheorem{corollary}{Corollary}[section]
\newtheorem{remark}{Remark}
\newcommand{\N}{{\mathbb{N}}}
\begin{document}
\renewcommand{\baselinestretch}{1.1}
\parskip 2pt
\large

\title{\bf \Large Invariant algebraic surfaces of the FitzHugh-Nagumo system}
\author {{\Large Liwei Zhang, Jiang Yu\thanks{Corresponding author. Research supported by the National Natural Science
Foundations of China (No.11431008) and NSF of Shanghai grant (No.15ZR1423700).}} \\
{Department of Mathematics, Shanghai Jiao Tong University}\\
{Shanghai  200240,  China}\\
\vspace{2mm} {E-mail: zhangliwei01@sjtu.edu.cn; jiangyu@sjtu.edu.cn;} }
\date{}
\maketitle
%\vspace {0.5cm}
\begin{minipage}{14.5cm}

{\bf Abstract}: In this paper, we characterize all the irreducible Darboux polynomials and polynomial first integrals of FitzHugh-Nagumo (F-N) system. The method of the weight homogeneous polynomials and the characteristic curves is widely used to give a complete classification of Darboux polynomials of a system. However, this method does not work for F-N system. Here by considering the Darboux polynomials of an assistant system associated to F-N system, we classified the invariant algebraic surfaces of F-N system. Our results show that there is no invariant algebraic surface of F-N system in the biological parameters region.

{\bf Keywords:} Darboux polynomial, integrability, FitzHugh-Nagumo system

{\bf AMS(2010) Subject Classification:} 37G05, 34C20, 34C14, 37J35.

\end{minipage}
\vspace{5mm}

\section{Introduction }%1
%\hskip \parindent

In this paper, we consider the FitzHugh-Nagumo (F-N) system
\begin{equation}\label{e0}
u_{t}=u_{xx}-f(u)-v, v_{t}=\varepsilon(u-\gamma v),
\end{equation}
where $f(u)=u(u-1)(u-a), $ and $0<a<\frac{1}{2}$, $\varepsilon>0,\gamma>0$ are biological parameters. We say, $u$ presents the voltage inside the axon at position $x\in\mathbb{R}$ and time $t$; $v$ presents a part of trans-membrance current that is  passing slowly adapting iron channels.

These equations were introduced in papers of FitzHugh \cite{17} and Nagumo et al. \cite{10}. FitzHugh\cite{17} simplified the 4-dimensional Hodgkin-Huxley (H-H) system into a planar system which is called Bonhoeffer-Van der Por system (BVP system for short). In \cite{17}, the author considered the excitable and oscillatory behavior of BVP system, and showed the underlying relationship between BVP system and H-H system. By the method that FitzHugh used in \cite{17} and the Kirchhoff's law, Nagumo et al. \cite{10} considered the propagation of the excitation along the nerve axon into H-H system, then H-H system becomes the partial differential system \eqref{e0}. From then on, FitzHugh-Nagumo (F-N) system \eqref{e0} has been studied extensively in the literature, and it becomes one of the simplest models describing the excitation of neural membranes and the propagation of nerve impulses along an axon. It has been attracting lots of attentions about the existence, uniqueness and stability of this equation's travelling wave, such as in \cite{1,6,7,8,10,18} and the references therein.

If we assume that the travelling wave of F-N system is a bounded solution $(u, v)(x, t)$ with $x, t\in\mathbb{R}$,  satisfying $(u, v)(x, t)=(u, v)(x+ct)$, where $c$ is a constant denoting the wave speed. Substituting $u=u(x+ct), v=v(x+ct)$ into system \eqref{e0}, then we have the following ordinary differential system
\begin{align}\label{e1}
\dot{x}&=z=P(x,y,z),\\
\dot{y}&=b(x-dy)=Q(x,y,z),\nonumber\\
\dot{z}&=x(x-1)(x-a)+y+cz=R(x,y,z),\nonumber
\end{align}
where the dot denotes derivative with respect to $\tau$ with $\tau=x+ct$, and $x=u, y=v, z=\dot{u}, b=\frac{\varepsilon}{c}, d=\gamma$.

In this paper, we focus on Darboux polynomials of 3-dimensional F-N system \eqref{e1}, with arbitrary parameters $a, b, c, d\in\mathbb{R}$. The Darboux polynomials and Darboux theory of integrability are considered as one of the important tools to look for first integral, and they also provide a relationship between the integrability of polynomial vector fields and the number of algebraic invariant surfaces of the system (see more details about the Darboux integrability in\cite{5, 19}). There have been lots of papers discussing a system's invariant algebraic surfaces (Darboux polynomials), such as \cite{2,3,14,15,20}. By studying the first integral or the invariant surface of a system,  we can make a more precise analysis of the topological structure of dynamics of the system.

 For F-N system \eqref{e1}, Jaume Llibre and Cl\`{a}udia Valls \cite{11} studied the system's analytic first integrals. When parameters $b=c=0$, they obtained the analytic first integrals, which is the same as the Darboux polynomials with zero cofactor we have obtained in this paper. For F-N system \eqref{e0},  if we don't consider the propagation of the excitation along the nerve axon,  i.e. $u_{xx}=b=$ constant, then it becomes the following planar F-N system
\begin{align*}
\dot{x}&=x(1-x)(x+a)-y+b,\\
\dot{y}&=d(x-cy),
\end{align*}
where $a, b, c, d\in\mathbb{R}$ are parameters. In \cite{12}, the authors studied the Liouvillian integrability of the planar F-N system. Because the Liouvillian integrability is equivalent to the Darboux integrability for the planar polynomial vector fields, they classified the invariant algebraic curves of planar F-N system.

 The method we used in this paper was introduced in \cite{5}. This method contains the use of weight homogeneous polynomials and the method of characteristic curves for solving linear partial differential equations. And this method has been widely used to deal with the invariant algebraic surfaces of many famous systems, such as Lorenz system \cite{13}, generalized Lorenz system \cite{16}, Muthuswamy-chua system \cite{15} et al.

 According to the characteristic curve method which will be introduced in the next section, the critical step is to construct linear partial differential operator in order to obtain a pair of characteristic curves. However, it is difficult to find a linear differential operator which is valid for system \eqref{e1}. To overcome this difficulty, we construct an assistant system \eqref{e24} associated to system \eqref{e1}. For system \eqref{e24}, we can find the linear differential operator to classify the Darboux polynomials by the characteristic curve method. Thus, based on the relationship between assistant system \eqref{e24} and system \eqref{e1}, we give a complete classification of the invariant algebraic surfaces of F-N system \eqref{e1}.

In the following, we provide some definitions which is necessary to our proof.
 \begin{definition}\label{d1}
 A polynomial $f(x, y, z)\in\mathbb{C}[x, y, z]$ and $f(x, y, z)\not\equiv 0$,  the ring of the complex coefficient polynomials in $x, y, z$ is called a Darboux polynomial for the  F-N system \eqref{e1} if
 $$
 \frac{\partial f}{\partial x}P+\frac{\partial f}{\partial y}Q+\frac{\partial f}{\partial z}R=kf
 $$
 for some real polynomial $k(x, y, z)$,  and $k(x, y, z)$ is called cofactor of $f$.
 \end{definition}
It is easy to know that the degree of $k(x, y, z)$ is smaller than the degree of system \eqref{e1},  where the degree of system \eqref{e1} equals to max degree of $\{P, Q, R\}$. If $f(x, y, z)$  is a Darboux polynomial, then the set $\{(x, y, z)\in\mathbb{R}^{3}\  |\ f(x, y, z)=0\}$ is invariant under the flow of system \eqref{e1}.

\begin{definition}
If $f(x, y, z)$  is a Darboux polynomial, the set $\{(x, y, z)\in\mathbb{R}^{3}\ |\ f(x, y, z)=0\}$ is called an invariant algebraic surface.
\end{definition}
We usually call simply $f(x, y, z)=0$ an invariant algebraic surface.

\begin{definition}
A real function $H(x, y, z, t):\mathbb{R}^{3}\times\mathbb{R}\rightarrow\mathbb{R}$ is called an invariant of system \eqref{e1},  if $H(x, y, z, t)=constant$ on all solution curves $(x(t), y(t), z(t))^T$ of system \eqref{e1}. If the invariant $H$ is independent of the time,  then it is called a first integral.
\end{definition}
If $H(x, y, z, t)$ is differentiable in $\mathbb{R}^{3}$,  then $H$ is an invariant of system \eqref{e1} if and only if $$\frac{\partial H}{\partial t}+\frac{\partial H}{\partial x}P+\frac{\partial H}{\partial y}Q+\frac{\partial H}{\partial z}R=0.$$
\begin{definition}
A function $H(x, y, z)$ is called an algebraic function, if it satisfies the algebraic equation
$$
f_{0}+f_{1}C+f_{2}C^{2}+\ldots+f_{n-1}C^{n-1}+C^{n}=0,
$$
where $f_{i}(x, y, z), (i=0, 1, \ldots, n-1)$ are given rational functions,  and $n$ is the smallest positive integer for which such a relation holds.
\end{definition}
Obviously,  any rational and polynomial functions are algebraic.
\begin{definition}
If the first integral $H$ is a polynomial (resp. a rational function or an algebraic function),  then it is called a polynomial first integral (resp. a rational first integral or an algebraic first integral).
\end{definition}
\begin{definition}
Let $H_{1}(x, y, z)$ and $H_{2}(x, y, z)$ be two first integrals of system \eqref{e1}. $H_{1}$ and $H_{2}$ are called independent, if their gradients are linearly independent vectors for all points $(x, y, z)\in\mathbb{R}^{3}$ except for a zero Lebesgue measure sets.
\end{definition}
The F-N system \eqref{e1} is called polynomial integrable, or rational integrable or algebraic integrable if it has two independent polynomial first integrals,  or rational first integrals or algebraic first integrals.
\begin{definition}
Let $\mathcal{S}$ be the set of all Darboux polynomials for system \eqref{e1}. A set $\mathcal{T}$ is called a minimum subset of $\mathcal{S}$, if every element of $\mathcal{S}$ is the finite product of the elements of $\mathcal{T}$, and the finite addition of the elements of $\mathcal{T}$ with the same cofactor. Furthermore we call every element of $\mathcal{T}$ a generator of $\mathcal{S}$.
\end{definition}
According to the Definition 7,  the generators for the set of a system's Darboux polynomials are irreducible Darboux polynomials.

The following proposition indicts the reason why we find all the irreducible Darboux polynomials.
\begin{proposition}
Assume that $f(x)\in\mathbb{C}[x]$ has an irreducible decomposition,  saying $f(x)=f_{1}^{l_{1}}\cdots f_{m}^{l_{m}}$ with $f_{i}\in\mathbb{C}[x]$ and $l_{i}\in\mathbb{N}$ for $i\in\{1, \ldots, m\}.$ Then $f(x)$ is a Darboux polynomial of a polynomial vector field if and only if $f_{i}$ for $i=1, \ldots, m$ are Darboux polynomials of the system. Moreover,  if $k(x)$ and $k_{i}(x)$ are cofactors of $f(x)$ and $f_{i}(x)$ for $i=1, \ldots, m$ respectively, then $k(x)=l_{1}k_{1}(x)+\cdots+l_{m}k_{m}(x)$.
\end{proposition}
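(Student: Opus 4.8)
The plan is to regard the left-hand side of the defining identity in Definition \ref{d1} as a single linear operator acting on polynomials. Writing $\mathcal{L} := P\,\partial_x + Q\,\partial_y + R\,\partial_z$, the condition defining a Darboux polynomial becomes $\mathcal{L}f = kf$, and the whole statement then reduces to two facts about $\mathcal{L}$: first, that $\mathcal{L}$ is a derivation of the ring $\mathbb{C}[x,y,z]$, i.e. it is $\mathbb{C}$-linear and obeys the Leibniz rule $\mathcal{L}(gh) = g\,\mathcal{L}h + h\,\mathcal{L}g$; and second, that $\mathbb{C}[x,y,z]$ is a unique factorization domain, so that distinct irreducible factors are pairwise coprime. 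Both are standard, and the argument is purely algebraic, requiring no special feature of system \eqref{e1}.

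For the sufficiency direction I would assume each $f_i$ is a Darboux polynomial with cofactor $k_i$, so $\mathcal{L}f_i = k_i f_i$. Applying the Leibniz rule to the product $f = f_1^{l_1}\cdots f_m^{l_m}$, and to each power in turn, gives
\[
\mathcal{L}f = \sum_{i=1}^m l_i f_i^{l_i-1}(\mathcal{L}f_i)\prod_{j\ne i} f_j^{l_j} = \sum_{i=1}^m l_i k_i f_i^{l_i}\prod_{j\ne i} f_j^{l_j} = \Big(\sum_{i=1}^m l_i k_i\Big) f .
\]
Hence $f$ is a Darboux polynomial whose cofactor is $\sum_{i=1}^m l_i k_i$. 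Since $\mathbb{C}[x,y,z]$ is an integral domain and $f\not\equiv 0$, the cofactor is uniquely determined by $f$, which simultaneously proves the claimed formula $k = l_1 k_1 + \cdots + l_m k_m$.

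The necessity direction is the step I expect to be the crux. Here I would assume $\mathcal{L}f = kf$ and expand $\mathcal{L}f$ by the Leibniz rule exactly as above, obtaining
\[
\sum_{i=1}^m l_i f_i^{l_i-1}(\mathcal{L}f_i)\prod_{j\ne i} f_j^{l_j} = k\,f_1^{l_1}\cdots f_m^{l_m} .
\]
Fix an index $i$. Every summand with $s\ne i$ still contains the full factor $f_i^{l_i}$, because $i$ then lies in the product $\prod_{j\ne s}$, and the right-hand side plainly contains $f_i^{l_i}$; therefore $f_i^{l_i}$ must also divide the one remaining summand $l_i f_i^{l_i-1}(\mathcal{L}f_i)\prod_{j\ne i} f_j^{l_j}$, whence $f_i \mid l_i (\mathcal{L}f_i)\prod_{j\ne i} f_j^{l_j}$. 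Since $l_i$ is a nonzero constant (hence a unit) and, by unique factorization, the irreducible $f_i$ is coprime to every $f_j$ with $j\ne i$, we conclude $f_i \mid \mathcal{L}f_i$, so that $\mathcal{L}f_i = k_i f_i$ for some polynomial $k_i$; that is, each $f_i$ is a Darboux polynomial, and the cofactor relation follows from the sufficiency computation. The only real subtlety is the divisibility bookkeeping in this last step: one must track the exact power of $f_i$ on both sides in order to isolate the single summand not automatically divisible by $f_i^{l_i}$, and then invoke coprimality of the irreducible factors to pass from $f_i \mid (\mathcal{L}f_i)\prod_{j\ne i}f_j^{l_j}$ to $f_i \mid \mathcal{L}f_i$.
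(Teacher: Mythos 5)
Your proof is correct. Note that the paper itself states this proposition without proof, treating it as a standard fact from the Darboux theory of integrability (it appears, e.g., in the cited book of Dumortier, Llibre and Art\'es), so there is no in-paper argument to compare against; your derivation-plus-UFD argument --- Leibniz rule for the vector field viewed as a derivation $\mathcal{L}$ of $\mathbb{C}[x,y,z]$ for sufficiency, and the divisibility bookkeeping with coprimality of distinct irreducible factors for necessity --- is exactly the standard proof, and all the steps (in particular isolating the one summand not automatically divisible by $f_i^{l_i}$, and using that $l_i$ is a unit and $f_i$ is coprime to the $f_j$ with $j\ne i$) are carried out correctly.
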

In the following, we state the main results in this paper.
\begin{theorem}\label{t1}
The F-N system \eqref{e1} has six generators for the set of Darboux polynomials listed in the Table 1:
\begin{table}
\begin{center}
\begin{tabular}[t]{|l|c|l|}
\hline
Darboux polynomials & Cofactors & Parameters\\

\hline

$\frac{1}{2}x^{4}-z^{2}+2xy+\frac{2}{3}cxz+(\frac{1}{9}c^{2}-1)x^{2}$ & $\frac{4}{3}c$ & $a=-1, bd=-c, b=\frac{2}{27}c^{3}-\frac{1}{3}c,c\neq0$\\

\hline

$\frac{1}{2}x^{4}-z^{2}+2xy+\frac{2}{3}cxz+(\frac{1}{9}c^{2}-1)x^{2}-\frac{1}{2}dy^{2}$ & $\frac{4}{3}c$ & $a=-1, bd=-\frac{2}{3}c, b=\frac{2}{27}c^{3}-\frac{1}{3}c,c\neq0$ \\
\hline

$\frac{1}{2}x^{4}-z^{2}+2xy+\frac{2}{3}cxz-\frac{2}{3}(a+1)x^{3}$ & & $a\neq-1, bd=-c, c\neq0$,  \\
\quad $+(\frac{1}{9}c^{2}+a)x^{2}
-\frac{2}{9}c(a+1)z-\frac{2}{3}(a+1)y$\qquad\qquad & $\frac{4}{3}c$ & $b=\frac{2}{27}c^{3}-\frac{1}{9}a^{2}c+\frac{1}{9}ac-\frac{1}{9}c$\\

\quad $-\frac{2}{27}c^{2}(a+1)x$\qquad\qquad\qquad\qquad\qquad &  & $2c^{2}+3a^{2}-12a+3=0$\\

\hline

$\frac{1}{2}x^{4}-z^{2}-\frac{1}{2}dy^{2}+2xy+\frac{2}{3}cxz-\frac{2}{3}(a+1)x^{3}$ & & $a\neq-1, bd=-\frac{2}{3}c, c\neq0,$  \\
\quad $+(\frac{1}{9}c^{2}+a)x^{2}
-\frac{2}{9}c(a+1)z-\frac{1}{3}(a+1)y$\qquad\qquad & $\frac{4}{3}c$ & $b=\frac{2}{27}c^{3}-\frac{1}{9}a^{2}c+\frac{1}{9}ac-\frac{1}{9}c$\\

\quad $-\frac{2}{27}c^{2}(a+1)x$\qquad\qquad\qquad\qquad\qquad &  & $2c^{2}+a^{2}-7a+1=0$\\
\hline

\qquad\qquad\qquad\qquad\qquad$y$ & $0$ & $b=c=0$\\
\hline

$\frac{1}{4}x^{4}-\frac{1}{2}z^{2}-\frac{1}{3}(a+1)x^{3}+xy+\frac{1}{2}ax^{2}$ & $0$ & $b=c=0$\\
\hline
\end{tabular}
\end{center}
\caption{The generators for the set of Darboux polynomials of system \eqref{e1}}
\end{table}
\end{theorem}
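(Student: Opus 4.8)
The plan is to classify the Darboux polynomials of \eqref{e1} by decomposing each candidate into weight-homogeneous blocks and then transferring the hard computation to the assistant system \eqref{e24}. First I would assign the weights $(s_x,s_y,s_z)=(1,3,2)$ to $(x,y,z)$, chosen so that the three leading monomials $x^4$, $z^2$, $xy$ occurring in the polynomials of Table 1 all share the common weighted degree $4$. Writing a candidate as $f=\sum_{j=0}^{n}f_j$ with $f_j$ weight-homogeneous of weighted degree $j$ and $f_n\not\equiv 0$, and comparing the highest weighted-degree terms in the defining identity $\frac{\partial f}{\partial x}P+\frac{\partial f}{\partial y}Q+\frac{\partial f}{\partial z}R=kf$, the leading part of the cofactor is forced to vanish and $f_n$ becomes a first integral of the principal (top weighted-degree) operator
$$\mathcal{L}=z\,\partial_x+(x^3+y)\,\partial_z.$$
Since $\mathcal{L}$ carries no $\partial_y$, both $y$ and $H=\tfrac12 z^2-\tfrac14 x^4-xy$ are first integrals, so $f_n$ is a weight-homogeneous function of $H$ and $y$; this already reproduces the leading block $\tfrac12 x^4-z^2+2xy$ visible in the first four rows of Table 1.

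Next I would descend through the lower weighted-degree equations, each of the form $\mathcal{L}(f_{j-1})=k\,f_j-(\mathbf{X}-\mathcal{L})(f_j)$ with right-hand side already determined at the previous step. This is exactly the difficulty flagged in the introduction: solving these inhomogeneous first-order linear PDEs for \eqref{e1} directly by the method of characteristics is obstructed because the characteristics of $\mathcal{L}$ are governed by the quartic $H$, and inverting $\mathcal{L}$ does not return polynomial data cleanly. To bypass this I would invoke the assistant system \eqref{e24}, whose vector field is arranged so that its own principal operator admits a pair of tractable characteristic curves, and whose Darboux polynomials stand in explicit correspondence with those of \eqref{e1}. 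I would classify the Darboux polynomials of \eqref{e24} completely by the characteristic-curve method, reading off each admissible weight-homogeneous seed together with the cofactor it forces.

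The core of the argument is then the translation back to \eqref{e1}. For each candidate supplied by the assistant system I would substitute into $\frac{\partial f}{\partial x}P+\frac{\partial f}{\partial y}Q+\frac{\partial f}{\partial z}R=kf$ and match coefficients weighted-degree by weighted-degree; the solvability conditions at each descent step generate precisely the algebraic constraints on $(a,b,c,d)$ recorded in the third column of Table 1 (for instance $bd=-c$, $b=\tfrac{2}{27}c^3-\tfrac13 c$, and $2c^2+3a^2-12a+3=0$), while the cofactor is pinned to the constant $\tfrac43 c$. I expect this compatibility bookkeeping to be the main obstacle: one must verify that the successive $f_j$ remain polynomial and that no further lower-order term can be appended without violating an identity, and it is exactly the interplay of these constraints that both fixes the shape of each generator and excludes every other irreducible Darboux polynomial.

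Finally I would dispose of the degenerate case $b=c=0$ separately. There $\dot y=0$, so $y$ is itself a Darboux polynomial with zero cofactor, and the reduced conservative system $\ddot x=x(x-1)(x-a)+y$ yields the energy first integral $\tfrac14 x^4-\tfrac12 z^2-\tfrac13(a+1)x^3+xy+\tfrac12 a x^2$, again with zero cofactor; these account for the last two rows. Appealing to the Proposition stated above, every Darboux polynomial factors into irreducible Darboux polynomials and each irreducible factor must occur in the list produced, so the six polynomials exhibited are exactly the generators of the set $\mathcal{S}$, which completes the proof.
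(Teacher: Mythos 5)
Your outline captures the right global strategy (weight decomposition, an assistant system, descent along characteristics, and a separate treatment of $b=c=0$), and your identification of the zero-cofactor generators $y$ and the energy integral is correct. But as written the argument has genuine gaps at exactly the places where the work lies. First, the claim that ``the leading part of the cofactor is forced to vanish'' does not follow from degree counting. With your weights the vector field raises weighted degree by at most one, so comparing top terms only kills the cofactor components of weighted degree $\ge 2$ and leaves $k=k_1x+k_0$ with the top equation $\mathcal{L}(f_n)=k_1xf_n$, not $\mathcal{L}(f_n)=0$. Proving $k_1=0$ is the content of the paper's Lemma \ref{l6}: one solves the characteristic ODE, finds that $F_0$ acquires the factor $(\sqrt{2}x^2+2z)^{\frac{\sqrt{2}}{2}k_1}$, and then forces the coefficients of the non-polynomial elliptic integrals appearing in $F_1$ to vanish. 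Second, your two gradings do not mesh. You choose weights $(1,3,2)$ precisely so that $xy$ joins the leading block and $f_n$ becomes a function of $y$ and $H=\frac12z^2-\frac14x^4-xy$; but the assistant system's operator \eqref{op} and its characteristic curves \eqref{E3} live in the $(1,2,2)$ grading, where $2xy$ sits in the subleading block $F_1$ and the second characteristic is the $y$-free quartic $\frac14x^4-\frac12z^2$. Under your grading the $y\,\partial_z$ term belongs to the principal operator and the characteristics are level sets of the $y$-dependent quartic $H$ --- exactly the intractable situation you flag --- so ``invoking the assistant system'' cannot be done without discarding your decomposition and redoing the setup as the paper does.

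Beyond that, essentially all of the substance is deferred to ``compatibility bookkeeping'': the derivation of the four parameter families (including why the odd-in-$y$ seeds of type \eqref{e7} produce nothing, and why $bd\in\{-c,-\tfrac23c\}$ and $b=\frac{2}{27}c^3-\frac19a^2c+\frac19ac-\frac19c$ with the quadratic constraints on $(a,c)$ are forced), the uniqueness argument showing that the polynomial built by the descent from $F_0=a_0(\frac14x^4-\frac12z^2)^n$ equals $\phi_i^{\,n}$ (so that the $\phi_i$ are irreducible and the list of generators is exhaustive, Lemma \ref{l4}), and the proof that a zero cofactor \emph{forces} $b=c=0$ (Lemma \ref{l5}) rather than merely being consistent with it. Your final appeal to the irreducible-factorization Proposition only reduces the problem to classifying irreducible Darboux polynomials; it does not establish that the six listed polynomials exhaust them. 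As it stands the proposal is a plan that reproduces the table, not a proof that nothing else belongs in it.
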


\begin{corollary}\label{c1}
\begin{description}
\item{(a)}\ The F-N system \eqref{e1} has polynomial first integrals if and only if $b=c=0$.
\item{(b)}\ The F-N system \eqref{e1} is polynomial integrable.
\end{description}
\end{corollary}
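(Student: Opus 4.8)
The plan is to derive both parts directly from the classification in Theorem \ref{t1}, exploiting the elementary fact that a polynomial $H(x,y,z)$ is a (time-independent) first integral of system \eqref{e1} exactly when $\frac{\partial H}{\partial x}P+\frac{\partial H}{\partial y}Q+\frac{\partial H}{\partial z}R=0$; that is, a polynomial first integral is precisely a Darboux polynomial whose cofactor is identically zero. Thus the whole question reduces to deciding, for each choice of $a,b,c,d$, whether system \eqref{e1} admits a nonconstant Darboux polynomial of zero cofactor, and this can be read off Table 1.

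For the ``if'' direction of (a) I would note that when $b=c=0$ the last two rows of Table 1 supply $y$ and $\frac14 x^4-\frac12 z^2-\frac13(a+1)x^3+xy+\frac12 ax^2$, both of cofactor $0$, each therefore a polynomial first integral. For the ``only if'' direction, let $H$ be a polynomial first integral and take its irreducible decomposition $H=f_1^{l_1}\cdots f_m^{l_m}$; by the Proposition each $f_i$ is a Darboux polynomial with cofactor $k_i$ and $\sum_i l_i k_i=0$. By Definition 7 every Darboux polynomial is built from the six generators of Table 1 through products, which add cofactors, and sums of equal cofactor, which preserve them; hence every cofactor, in particular each $k_i$, is a nonnegative integer multiple of the single nonzero value $\frac43 c$. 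Since the $l_i$ are positive, $\sum_i l_i k_i=0$ forces either $c=0$ or $k_i=0$ for every $i$. In the latter case $H$ is a product of zero-cofactor generators, which by Table 1 occur only for $b=c=0$. To finish I rule out $c=0$ with $b\neq0$: when $c=0$ the four generators of cofactor $\frac43 c$ do not exist (they require $c\neq0$) and the two generators of cofactor $0$ require $b=c=0$, so for $c=0,\ b\neq0$ system \eqref{e1} has no Darboux polynomial at all and $H$ could not exist. Hence $b=c=0$, proving (a).

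For (b), part (a) shows a polynomial first integral exists only when $b=c=0$, so I may assume $b=c=0$, where system \eqref{e1} reads $\dot x=z$, $\dot y=0$, $\dot z=x(x-1)(x-a)+y$. Here $H_1=y$ and $H_2=\frac14 x^4-\frac12 z^2-\frac13(a+1)x^3+xy+\frac12 ax^2$ are polynomial first integrals, either by their vanishing cofactor in Table 1 or by the one-line check $\dot H_1=\dot H_2=0$. Their gradients are $\nabla H_1=(0,1,0)$ and $\nabla H_2=\bigl(x^3-(a+1)x^2+ax+y,\,x,\,-z\bigr)$, and $\nabla H_2\times\nabla H_1=\bigl(z,\,0,\,x^3-(a+1)x^2+ax+y\bigr)$ vanishes only on the set $\{z=0,\ x^3-(a+1)x^2+ax+y=0\}$, a one-dimensional curve of zero Lebesgue measure. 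Thus $H_1$ and $H_2$ are independent in the sense of Definition 6, so system \eqref{e1} possesses two independent polynomial first integrals and is polynomial integrable.

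The only genuinely delicate point is the cofactor-cancellation step in (a): one must be certain that combining the nonzero-cofactor generators can never manufacture the zero cofactor that a first integral requires. This works precisely because Theorem \ref{t1} shows every nonzero cofactor equals the one value $\frac43 c$, so any combination of generator cofactors is a nonnegative (resp. nonpositive) multiple of $c$ and vanishes only when $c=0$ or when no nonzero-cofactor factor appears. Everything else is bookkeeping against the parameter columns of Table 1 together with the routine gradient computation in (b).
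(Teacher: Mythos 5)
Your proof is correct and follows the same route as the paper, which simply declares (a) "obvious from Theorem \ref{t1}" and asserts (b) from the existence of two independent polynomial integrals. You merely supply the details the paper omits: the cofactor bookkeeping showing that products of the generators of cofactor $\frac{4}{3}c$ can never produce the zero cofactor unless $c=0$ (and then $b=0$ is forced since no generator survives $c=0,\ b\neq 0$), and the explicit gradient computation verifying that $y$ and $\frac{1}{4}x^{4}-\frac{1}{2}z^{2}-\frac{1}{3}(a+1)x^{3}+xy+\frac{1}{2}ax^{2}$ are independent off a measure-zero set.
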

This paper is  organised as follows. In Section 2,  we introduce the method of characteristic curves for solving linear partial differential equation and the weight homogeneous polynomials; in Section 3, we present the proof of Theorem \ref{t1} and Corollary \ref{c1}.

\begin{remark}
Theorem \ref{t1} implies that there is no invariant algebraic surface of F-N system in the biological parameters region $0<a<\frac{1}{2}$, $\varepsilon>0,\gamma>0$. Corollary \ref{c1} coincides with the results in \cite{11}.
\end{remark}

\section{The characteristic curve method }%2

The characteristic curves method for solving linear differential system see for instance\cite{4, 13}.

Consider the following first-order linear partial differential equation
\begin{equation}\label{e21}
a_{1}(x, y, z)A_{x}+a_{2}(x, y, z)A_{y}+a_{3}(x, y, z)A_{z}+a_{0}(x, y, z)A=f(x, y, z),
\end{equation}
where $A=A(x, y, z), a_{i}, i\in\{0, 1, 2, 3\}$ and $f$ are $C^{1}$ maps.
\begin{definition}
A curve $(x(t), y(t), z(t))$ in the $xyz$ space is called a characteristic curve for the equation \eqref{e21},  if the vector $(a_{1}(x_{0}, y_{0}, z_{0}), a_{2}(x_{0}, y_{0}, z_{0}), a_{3}(x_{0}, y_{0}, z_{0}))^T$ is tangent to the curve at each point $(x_{0}, y_{0}, z_{0})$ on the curve.
\end{definition}
Based on the definition,  a characteristic curve is a solution of the system
$$\frac{dx}{dt}=a_{1}(x, y, z), \ \frac{dy}{dt}=a_{2}(x, y, z),\  \frac{dz}{dt}=a_{3}(x, y, z).$$

Without loss of generality, assuming $a_{1}(x, y, z)\neq0$ locally,  then the above system is reduced to the following system
\begin{equation}\label{e18}
\frac{dy}{dx}=\frac{a_{2}(x, y, z)}{a_{1}(x, y, z)},\ \frac{dz}{dx}=\frac{a_{3}(x, y, z)}{a_{1}(x, y, z)}.
\end{equation}

The ordinary differential equation \eqref{e18} is known as the \emph{characteristic equation} of \eqref{e21}.

Suppose $g(x, y, z)=c_{1}, h(x, y, z)=c_{2}$ is a solution of equation \eqref{e18} in the implicit form,  where $c_{1}$ and $c_{2}$ are arbitrary constants. Taking the change of the variables
$$u=x,\ v=g(x, y, z),\ w=h(x, y, z), $$
then we have
$$x=u,\ y=p(u, v, w),\ z=q(u, v, w).$$
Hence the linear partial differential equation \eqref{e21} becomes the following ordinary differential equation in $u$ for fixed $v,w$,
\begin{equation}\label{e22}
\overline a_{1}(u, v, w)\overline A_{u}+\overline a_{0}(u, v, w)\overline A=\overline f(u, v, w),
\end{equation}
where $\overline a_{0},  \overline a_{1},  \overline A$ and $\overline f$ are $a_{0}, a_{1}, A, f$ written in terms of $u, v$ and $w$.

If $\overline A=\overline A(u, v, w)$ is a solution of \eqref{e22},  then by the variable transformation,  we get
$$A(x, y, z)=\overline A(x, g(x, y, z), h(x, y, z)),$$
which is a solution of the linear partial differential equation \eqref{e21}. Moreover,
 we can obtain the general solution of \eqref{e21} from the general solution of \eqref{e22} by the variable transformation.

At last,  we provide a definition of weight homogeneous polynomials,  which is used in the proof of our results.
\begin{definition}
A polynomial $f(x_{1}, x_{2}, \ldots, x_{n})$ is said to be weight homogeneous if there exist $(s_{1}, s_{2}, \ldots, s_{n})\in\mathbb{N}$ and $d\in\mathbb{N}$ such that for all $\alpha\in\mathbb{R}\backslash\{0\}$,
$$
f(\alpha^{s_{1}}x_{1}, \alpha^{s_{2}}x_{2}, \ldots, \alpha^{s_{n}}x_{n})=\alpha^{d}f(x_{1}, x_{2}, \ldots, x_{n}),
$$
where $d$ is the weight degree of the polynomial,  $(s_{1}, s_{2}, \ldots, s_{n})$ is the weight exponents of the polynomial.
\end{definition}

\section{The proof of Theorem \ref{t1} }%3

In this section,  we will present our proof of  Theorem \ref{t1} on the basis of characteristic curve method. In order to obtain a pair of characteristic curves, we need to construct a linear partial differential operator. However, we find that it is impossible to get some operator for F-N system \eqref{e1}. Instead, we consider an assistant system corresponding to system \eqref{e1} as follows
\begin{align}\label{e24}
\dot{x}&=z,\\
\dot{y}&=b(x-dy)+mxz,\nonumber\\
\dot{z}&=x(x-1)(x-a)+y+cz.\nonumber
\end{align}

The difference between F-N system \eqref{e1} and the assistant system \eqref{e24} is whether there is a term $mxz$ in $Q(x, y, z)$. By adding the term $mxz$,  we can construct the linear partial differential operator $L$ based on system \eqref{e24},
\begin{equation}\label{op}
L=z\frac{\partial}{\partial x}+mxz\frac{\partial}{\partial y}+x^{3}\frac{\partial}{\partial z}.
\end{equation}

We can check that its corresponding  characteristic curves are weight polynomials of weight exponents $(1, 2, 2)$. And Darboux polynomial of system \eqref{e24} can be expanded into a series of weight polynomials on the basis of the characteristic curves. Hence, the characteristic curve method works for system \eqref{e24}. It is obvious that the Darboux polynomials of system \eqref{e1} are the ones of system \eqref{e24} with $m=0$.

In the following, we use the method of characteristic curves to discuss the Darboux polynomials of the assistant system \eqref{e24}.

In order to obtain the linear partial differential operator,  we make a weight change of variables,  $(X, Y, Z, T)=(\alpha x, \alpha^{2}y,  \alpha^{2}z,  \alpha^{-1}t)$,  $\alpha\in\mathbb{R}\backslash\{0\}$. The assistant system \eqref{e24} becomes
 \begin{align}\label{e2}
X^{\prime}&=Z,\nonumber\\
Y^{\prime}&=-\alpha bdY+\alpha^{2}bX+mXZ,\\
Z^{\prime}&=X^{3}-\alpha[(a+1)X^{2}-Y-cZ]+\alpha^{2}aX,\nonumber
\end{align}
where the prime denotes the derivative with respect to T.

The relationship between Darboux polynomials of system \eqref{e24} and system \eqref{e2} will be shown in the following.

Suppose that $f(x, y, z)$ is the Darboux polynomial of the assistant system \eqref{e24} with cofactor $k(x, y, z)$. It is easy to prove that the degree of $k$ is less than or equal to 2. Therefore, we can assume that the cofactor is in the form$$
k(x, y, z)=k_{200}x^{2}+k_{110}xy+k_{011}yz+k_{101}xz+k_{020}y^{2}+k_{002}z^{2}+k_{100}x+k_{010}y+k_{001}z+k_{0}.$$

Set
    $$F(X, Y, Z)=\alpha^{l}f(\alpha^{-1}X, \alpha^{-2}Y, \alpha^{-2}Z),$$
    $$K(X, Y, Z)=\alpha^{h}k(\alpha^{-1}X, \alpha^{-2}Y, \alpha^{-2}Z).$$
Expanding $f(x, y, z)$ and $k(x, y, z)$ by the order of weight degree with weight exponents $(1, 2, 2)$, where $l, h$ are the highest weight degree in the expansion of $f(x, y, z)$ and $k(x, y, z)$,  respectively. It is easy to check that $h=4$. Then $F(X, Y, Z)$ is the Darboux polynomial of system \eqref{e2} with cofactor $\alpha^{-3}K(X, Y, Z)$, since
\begin{equation*}
\begin{array}{ll}
\frac{dF}{dT}|_{\eqref{e2}}&=\alpha^{l+1}k(\alpha^{-1}x, \alpha^{-2}y, \alpha^{-2}z)f(\alpha^{-1}x, \alpha^{-2}y, \alpha^{-2}z)\\
&=\alpha^{-3}K(X, Y, Z)F(X, Y, Z).
\end{array}
\end{equation*}
In the view of $f=F|_{\alpha=1}$, the Darboux polynomials of F-N system \eqref{e1} satisfy $f|_{m=0}=F|_{\alpha=1, m=0}$.

We remark that an arbitrary polynomial can be expanded into a series of weight homogeneous polynomials by the order of weight degree with the same weight exponents. Then, we can expand
\begin{equation}\label{fF}
F(X, Y, Z)=F_{0}(X, Y, Z)+\alpha F_{1}(X, Y, Z)+\alpha^{2}F_{2}(X, Y, Z)+\cdots+\alpha^{l}F_{l}(X, Y, Z),
\end{equation}
where $F_{j}$ is a weight homogeneous polynomial with weight exponents $(1, 2, 2)$, $j=0, 1, \ldots, l$,  and weight degree is  $l-j$. Noticing that $F$ is a Darboux polynomial of system \eqref{e2} with the cofactor $\alpha^{-3}K$, we have from Definition \ref{d1} of Darboux polynomial that
\begin{align*}
&z\sum\limits^{l}_{j=0}\alpha^{j}\frac{\partial{F_{j}}}{\partial x}+(mxz-\alpha bdy+\alpha^{2}bx)\sum\limits^{l}_{j=0}\alpha^{j}\frac{\partial{F_{j}}}{\partial y}\\
&\quad +[x^{3}-\alpha[(a+1)x^{2}-y-cz]+\alpha^{2}ax]\sum\limits^{l}_{j=0}\alpha^{j}\frac{\partial{F_{j}}}{\partial z}\\
=&[\alpha^{-3}(k_{020}y^{2}+k_{002}z^{2}+k_{011}yz)+
\alpha^{-2}(k_{110}xy+k_{101}xz)\\
&\quad +\alpha^{-1}(k_{200}x^{2}+k_{010}y+k_{001}z)+k_{1}x+\alpha k_{0}]\sum\limits^{l}_{j=0}\alpha^{j}F_{j}.
\end{align*}
For convenience, we use $x, y, z$ and $k_{1}$ instead of $X, Y, Z$ and $k_{100}$ here.

Comparing the terms with $\alpha^{-3}, \alpha^{-2}, \alpha^{-1}$,  we have $K(x, y, z)=\alpha^3 k_{1}x+\alpha^4 k_{0}$, that is, the cofactor in \eqref{e24} is $k(x,y,z)=k_{1}x+k_{0}$.

Equating the terms with $\alpha^{j}$ $(j=0, 1, \ldots, l)$,  we get the following first linear partial partial differential equations
\begin{equation}\label{e3}
\begin{array}{ll}
L[F_{0}]&=k_{1}xF_{0},\\
L[F_{1}]&=k_{1}xF_{1}+k_{0}F_{0}+bdy\frac{\partial{F_{0}}}{\partial{y}}+
[(a+1)x^{2}-y-cz]\frac{\partial{F_{0}}}{\partial{z}},\\
L[F_{j}]&=k_{1}xF_{j}+k_{0}F_{j-1}+bdy\frac{\partial{F_{j-1}}}{\partial{y}}+
[(a+1)x^{2}-y-cz]\frac{\partial{F_{j-1}}}{\partial{z}}\\
&\quad -bx\frac{\partial{F_{j-2}}}{\partial{y}}-ax\frac{\partial{F_{j-2}}}{\partial{z}},\qquad\hfill j=2, 3, \ldots, l+3,
\end{array}
\end{equation}
when $j>l,  F_{j}=0$, and
$L$ is the linear partial differential operator of the form \eqref{op}.

For equations in \eqref{e3}, the characteristic equations are
$$\frac{dy}{dx}=mx,\ \frac{dz}{dx}=\frac{x^{3}}{z}.$$
The solutions of characteristic equations are
\begin{equation}\label{E3}
y-\frac{1}{2}mx^{2}=c_{1}, \quad \frac{1}{4}x^{4}-\frac{1}{2}z^{2}=c_{2},
\end{equation}
where $c_{1}$ and $c_{2}$ are constants of integration.

According to the method of characteristics curve,  we make the change of the variables
\begin{equation}\label{e4}
u=x,\ v=y-\frac{1}{2}mx^{2},\ w=\frac{1}{4}x^{4}-\frac{1}{2}z^{2}.
\end{equation}
Then, we have
$$
x=u,\ y=v+\frac{1}{2}mu^{2},\ z=\pm\sqrt{\frac{1}{2}u^{4}-2w}.
$$
In the following,  without loss of generality we take
\begin{equation}\label{e5}
x=u,\ y=v+\frac{1}{2}mu^{2},\ z=\sqrt{\frac{1}{2}u^{4}-2w}.
\end{equation}

Through the change of variables \eqref{e4} and \eqref{e5}, it follows from the first equation of \eqref{e3} for fixed $v$ and $w$ that
\begin{equation}\label{e6}
\sqrt{\frac{1}{2}u^{4}-2w}\frac{d\overline{F_{0}}}{du}=k_{1}u\overline F_{0},
\end{equation}
where $\overline{F_{0}}$ is the function $F_{0}$ written in the variables $u, v$ and $w$. In the following,  $\overline{F_{j}}$ denote $F_{j}$ written in $u, v$ and $w$. Solving Equation \eqref{e6}, we have
$$
\overline F_{0}(u, v, w)=\overline G_{0}(v, w)(\sqrt{2}u^{2}+2\sqrt{\frac{1}{2}u^{4}-2w})^{\frac{\sqrt{2}}{2}k_{1}},
$$
where $\overline G_{0}(v, w)$ is an arbitrary smooth function. Then
$$
F_{0}(x, y, z)=\overline F_{0}(u, v, w)=\overline F_{0}(x, y-\frac{1}{2}mx^{2}, \frac{1}{4}x^{4}-\frac{1}{2}z^{2}),
$$
is the solution of the first equation in \eqref{e3}. Similarly,
$$
F_{j}(x, y, z)=\overline F_{j}(x, y-\frac{1}{2}mx^{2}, \frac{1}{4}x^{4}-\frac{1}{2}z^{2}),\ j=1, \ldots, l,
$$
are the solutions of the other equations in \eqref{e3}.

Hence we write $F_0$ in variables $x,y$ and $z$ as follows
\begin{equation}\label{F0}
F_{0}(x, y, z)=\overline G_{0}(y-\frac{1}{2}mx^{2}, \frac{1}{4}x^{4}-\frac{1}{2}z^{2})(\sqrt{2}x^{2}+2z)^{\frac{\sqrt{2}}{2}k_{1}}.
\end{equation}
From the transformation \eqref{e4},  we know that $v(x, y, z)$ and $w(x, y, z)$ are weight homogeneous polynomials of degree $2$ and $4$ with weight exponents $(1, 2, 2)$. Then we know from \eqref{F0} that $G_{0}$ should be a weight polynomial of the following two forms for some $n\in \N$
\begin{equation}\label{e7}
G_{0}(x, y, z)=\overline G_{0}(v, w)=\sum\limits^{n}_{i=1}a_{i}(y-\frac{1}{2}mx^{2})^{2i-1}(\frac{1}{4}x^{4}-\frac{1}{2}z^{2})^{n-i},
\end{equation}
or
\begin{equation}\label{e8}
G_{0}(x, y, z)=\overline G_{0}(v, w)=\sum\limits^{n}_{i=0}a_{i}(y-\frac{1}{2}mx^{2})^{2i}(\frac{1}{4}x^{4}-\frac{1}{2}z^{2})^{n-i}.
\end{equation}

In the following,  we divide our proof into three parts. In section 3.1, we prove that the cofactor of Darboux polynomials of system \eqref{e24} must be a constant, Then in section 3.2 we classify the Darboux polynomial when the cofactor is a nonzero constant. In section 3.3,  we continue to discuss the Darboux polynomials with zero cofactor. In each section,  we shall consider the two different forms of $G_{0}$ respectively.

\subsection{The nonzero cofactor of Darboux polynomials}

In this subsection, we shall prove the following lemma.

\begin{lemma}\label{l6}
If $f(x,y,z)$ is a Darboux polynomial with nonzero cofactor of \eqref{e24}, then the cofactor is a constant, that is,
$k(x,y,z)=k_0$ and $k_{1}=0$.
\end{lemma}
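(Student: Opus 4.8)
The plan is to prove $k_1=0$ by controlling the leading weight-homogeneous component of $f$ through the recursion \eqref{e3}. Write $f=F|_{\alpha=1}$ and expand $F=F_0+\alpha F_1+\cdots+\alpha^l F_l$ as in \eqref{fF}. Since $f\neq 0$, the top component $F_0$ is a nonzero weight-homogeneous polynomial and, by \eqref{F0}, has the form $F_0=G_0(v,w)(\sqrt2 x^2+2z)^{\frac{\sqrt2}{2}k_1}$ with $G_0$ of type \eqref{e7} or \eqref{e8}. Set $A:=\sqrt2 x^2+2z$, $B:=\sqrt2 x^2-2z$ and $v:=y-\frac12 m x^2$; then $w=\frac18 AB$, and $A,B,v$ are algebraically independent. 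Substituting $w=\frac18 AB$ into \eqref{e7}, \eqref{e8} and using that each of these sums contains a pure power of $v$ (the term with no factor of $w$), I would first observe that $F_0$ is a genuine polynomial only when $N:=\frac{\sqrt2}{2}k_1$ is an integer. Suppose for contradiction that $N\neq 0$; I treat $N\geq 1$, the case $N\leq -1$ being entirely analogous with $A$ and $B$ interchanged. Then $F_0=A^N\Phi_0$ with $\Phi_0=\Phi_0(v,w)$ a weight-homogeneous polynomial, and it suffices to derive a contradiction.

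The structural fact I would exploit is that in the variables $(A,B,v)$ the operator $L$ of \eqref{op} is diagonal on the monomials $A^iB^jv^k$: since $LA=\sqrt2\,xA$, $LB=-\sqrt2\,xB$ and $Lv=0$, one has $L(A^iB^jv^k)=\sqrt2(i-j)x\,A^iB^jv^k$. Hence $\mathcal L:=L-k_1 x=L-\sqrt2 N x$ annihilates the charge-$N$ even monomials and maps $x\,A^pB^qv^r$ to $\frac{2(p-q-N)+1}{4}A^{p+1}B^qv^r+\frac{2(p-q-N)-1}{4}A^pB^{q+1}v^r$. Because $2(p-q-N)$ is an even integer, neither coefficient ever vanishes; thus, on each fixed power of $v$, $\mathcal L$ raises the $(A,B)$-degree by one through a nonresonant bidiagonal map. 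The first recursion in \eqref{e3} reads $\mathcal L F_1=R_1$ with $R_1:=k_0F_0+bd\,y\,\partial_y F_0+[(a+1)x^2-y-cz]\,\partial_z F_0$, an even polynomial of weight degree $l$; solving it for a polynomial $F_1$ of weight degree $l-1$ therefore decouples into finitely many bidiagonal systems, one per power of $v$, and each carries exactly one scalar compatibility condition located at the extremal $(A,B)$-coefficient of its target.

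Finally I would read $N=0$ off these compatibility conditions. The mechanism is already visible when $\Phi_0$ is constant, i.e. $F_0=A^N$: then $\partial_z F_0=2N A^{N-1}$, and the $-y$ part of $[(a+1)x^2-y-cz]$ contributes the term $-2N\,A^{N-1}v$ to $R_1$, which sits at the top $A$-power of the $v$-linear sector. The nonresonant bidiagonal recursion forces every lower coefficient of $F_1$ in that sector to vanish and leaves the boundary equation $0=-2N$, whence $N=0$ (and for $N=1$ the same term simply lies outside the image of $\mathcal L$). I expect the main obstacle to be carrying out this extremal-coefficient analysis for a general $G_0$ of the forms \eqref{e7}, \eqref{e8}: there $\partial_z F_0$ and $bd\,y\,\partial_y F_0$ populate several $A$-charges and several powers of $v$, so one must isolate the correct $v$-sector and the single monomial of extremal $A$-charge that cannot lie in the image of $\mathcal L$, and check that its coefficient is a nonzero multiple of $N$. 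A few degenerate configurations (such as $bdm=0$ or $G_0$ independent of $v$) would be handled separately by the same nonresonance argument. Once $N=0$ is established we have $k_1=0$, so the cofactor of \eqref{e24} is the constant $k_0$, which is the assertion of the lemma.
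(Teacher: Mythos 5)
Your coordinates are well chosen and the computation behind them is correct: $LA=\sqrt2\,xA$, $LB=-\sqrt2\,xB$, $Lv=0$, the formula for $\mathcal L(xA^pB^qv^r)$ checks out, and your preliminary observation that $N=\tfrac{\sqrt2}{2}k_1$ must be an integer (since $A$ is irreducible and $F_0/G_0=A^N$ is rational) is a point the paper leaves implicit. This is a genuinely different implementation from the paper's: there the authors pass to $(u,v,w)$, integrate the first recursion of \eqref{e3} explicitly using the appendix of elliptic-integral identities, and demand that the coefficients of the non-polynomial integrals $\int(\tfrac12u^4-2w)^{-1/2}du$ and $\int u^2(\tfrac12u^4-2w)^{-1/2}du$ vanish; your compatibility conditions for the bidiagonal system are exactly those obstructions seen algebraically, with no integral tables needed.

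The gap is that the argument is only carried out for $G_0$ constant, and the mechanism you announce for general $G_0$ --- ``the single monomial of extremal $A$-charge that cannot lie in the image of $\mathcal L$'' --- is not what occurs in general. Write $G_0=\sum_i a_iv^{e_i}w^{n-i}$ (so $e_i=2i-1$ or $2i$), let $i^*=\max\{i:a_i\neq0\}$ and $\tilde a_i=8^{-(n-i)}a_i$. The top $v$-sector $v^{e_{i^*}+1}$ of $R_1$ is reached only by $-v\,\partial_zF_0$ and equals
\[
-2\tilde a_{i^*}\Bigl[(N+n-i^*)A^{N+n-i^*-1}B^{\,n-i^*}-(n-i^*)A^{N+n-i^*}B^{\,n-i^*-1}\Bigr]v^{e_{i^*}+1},
\]
two adjacent monomials of charges $N-1$ and $N+1$, which are extremal only in the sub-case $i^*=n$. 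In general the compatibility condition is $\sum\lambda_{p'q'}d_{p'q'}=0$ with $\lambda$ the left null vector of the bidiagonal map, whose consecutive entries satisfy $\lambda_{p,q+1}/\lambda_{p+1,q}=-\tfrac{2(p-q-N)+1}{2(p-q-N)-1}$; the two relevant entries sit at $p-q=N$, where this ratio is $1$, so the condition collapses to $-2\lambda\tilde a_{i^*}N=0$ and hence $N=0$. So your route does close, but only after this null-vector computation, which is the missing step rather than a routine extension of the constant case. You should also state explicitly that $F_1$, being weight homogeneous of odd weight degree, is $x$ times a polynomial in $A,B,v$, and reduce $N\le-1$ to $N\ge1$ via $z\mapsto-z$ (which sends $L$ to $-L$ and $k_1$ to $-k_1$) rather than asserting the symmetry.
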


\begin{proof}
First, take $F_0$ with $G_0$ in \eqref{e7}, i.e.
$$
F_{0}=\sum\limits^{n}_{i=1}a_{i}(y-\frac{1}{2}mx^{2})^{2i-1}(\frac{1}{4}x^{4}
-\frac{1}{2}z^{2})^{n-i}(\sqrt{2}x^{2}+2z)^{\frac{\sqrt{2}}{2}k_{1}}.
$$

Substituting $F_{0}$ into the second equation of equation of \eqref{e3},  we obtain that
\begin{align*}
L[F_{1}]
&=k_{1}xF_{1} \\
&+\sum\limits^{n}_{i=1}k_{0}a_{i}(y-\frac{1}{2}mx^{2})^{2i-1}(\frac{1}{4}x^{4}-\frac{1}{2}z^{2})^{n-i}(\sqrt{2}x^{2}+2z)^{\frac{\sqrt{2}}{2}k_{1}} \\
&+\sum\limits^{n}_{i=1}bd(2i-1)a_{i}(y-\frac{1}{2}mx^{2})^{2i-1}(\frac{1}{4}x^{4}-\frac{1}{2}z^{2})^{n-i}(\sqrt{2}x^{2}+2z)^{\frac{\sqrt{2}}{2}k_{1}}\\
&+\sum\limits^{n}_{i=1}\frac{1}{2}mbd(2i-1)a_{i}(y-\frac{1}{2}mx^{2})^{2i-2}(\frac{1}{4}x^{4}-\frac{1}{2}z^{2})^{n-i}(\sqrt{2}x^{2}+2z)^{\frac{\sqrt{2}}{2}k_{1}}x^{2}\\
&+\sum\limits^{n}_{i=1}\sqrt{2}(a+1)k_{1}a_{i}(y-\frac{1}{2}mx^{2})^{2i-1}(\frac{1}{4}x^{4}-\frac{1}{2}z^{2})^{n-i}(\sqrt{2}x^{2}+2z)^{\frac{\sqrt{2}}{2}k_{1}-1}x^{2}\\
&-\sum\limits^{n}_{i=1}(a+1)(n-i)a_{i}(y-\frac{1}{2}mx^{2})^{2i-1}(\frac{1}{4}x^{4}-\frac{1}{2}z^{2})^{n-i-1}(\sqrt{2}x^{2}+2z)^{\frac{\sqrt{2}}{2}k_{1}}x^{2}z\\
&-\sum\limits^{n}_{i=1}\sqrt{2}k_{1}a_{i}(y-\frac{1}{2}mx^{2})^{2i}(\frac{1}{4}x^{4}-\frac{1}{2}z^{2})^{n-i}(\sqrt{2}x^{2}+2z)^{\frac{\sqrt{2}}{2}k_{1}-1}\\
&+\sum\limits^{n}_{i=1}(n-i)a_{i}(y-\frac{1}{2}mx^{2})^{2i}(\frac{1}{4}x^{4}-\frac{1}{2}z^{2})^{n-i-1}(\sqrt{2}x^{2}+2z)^{\frac{\sqrt{2}}{2}k_{1}}z\\
&+\sum\limits^{n}_{i=1}\frac{1}{2}m(n-i)a_{i}(y-\frac{1}{2}mx^{2})^{2i-1}(\frac{1}{4}x^{4}-\frac{1}{2}z^{2})^{n-i-1}(\sqrt{2}x^{2}+2z)^{\frac{\sqrt{2}}{2}k_{1}}x^{2}z\\
&-\sum\limits^{n}_{i=1}\frac{\sqrt{2}}{2}mk_{1}a_{i}(y-\frac{1}{2}mx^{2})^{2i-1}(\frac{1}{4}x^{4}-\frac{1}{2}z^{2})^{n-i}(\sqrt{2}x^{2}+2z)^{\frac{\sqrt{2}}{2}k_{1}-1}x^{2}\\
&-\sum\limits^{n}_{i=1}\sqrt{2}k_{1}ca_{i}(y-\frac{1}{2}mx^{2})^{2i-1}(\frac{1}{4}x^{4}-\frac{1}{2}z^{2})^{n-i}(\sqrt{2}x^{2}+2z)^{\frac{\sqrt{2}}{2}k_{1}-1}z\\
&+\sum\limits^{n}_{i=1}c(n-i)a_{i}(y-\frac{1}{2}mx^{2})^{2i-1}(\frac{1}{4}x^{4}-\frac{1}{2}z^{2})^{n-i-1}(\sqrt{2}x^{2}+2z)^{\frac{\sqrt{2}}{2}k_{1}}z^{2}.
\end{align*}

By the transformations \eqref{e4} and \eqref{e5},   we obtain the linear ordinary differential equation of $\overline F_{1}$ with respect to $u$ for fixed $v$ and $w$
\begin{align*}
\frac{d\overline F_{1}}{du}
&=k_{1}\overline F_{1}\frac{u}{\sqrt{\frac{1}{2}u^{4}-2w}} \\
&+\sum\limits^{n}_{i=1}k_{0}a_{i}v^{2i-1}w^{n-i}(\sqrt{2}u^{2}+2\sqrt{\frac{1}{2}u^{4}-2w})^{\frac{\sqrt{2}}{2}k_{1}}\frac{1}{\sqrt{\frac{1}{2}u^{4}-2w}}\\
&+\sum\limits^{n}_{i=1}bd(2i-1)a_{i}v^{2i-1}w^{n-i}(\sqrt{2}u^{2}+2\sqrt{\frac{1}{2}u^{4}-2w})^{\frac{\sqrt{2}}{2}k_{1}}\frac{1}{\sqrt{\frac{1}{2}u^{4}-2w}}\\
&+\sum\limits^{n}_{i=1}\frac{1}{2}mbd(2i-1)a_{i}v^{2i-2}w^{n-i}(\sqrt{2}u^{2}+2\sqrt{\frac{1}{2}u^{4}-2w})^{\frac{\sqrt{2}}{2}k_{1}}\frac{u^{2}}{\sqrt{\frac{1}{2}u^{4}-2w}}\\
&-\sum\limits^{n}_{i=1}\sqrt{2}k_{1}a_{i}v^{2i}w^{n-i}(\sqrt{2}u^{2}+2\sqrt{\frac{1}{2}u^{4}-2w})^{\frac{\sqrt{2}}{2}k_{1}-1}\frac{1}{\sqrt{\frac{1}{2}u^{4}-2w}}\\
&+\sum\limits^{n}_{i=1}\sqrt{2}(a+1)k_{1}a_{i}v^{2i-1}w^{n-i}(\sqrt{2}u^{2}+2\sqrt{\frac{1}{2}u^{4}-2w})^{\frac{\sqrt{2}}{2}k_{1}-1}\frac{u^{2}}{\sqrt{\frac{1}{2}u^{4}-2w}}\\
&-\sum\limits^{n}_{i=1}\frac{\sqrt{2}}{2}mk_{1}a_{i}v^{2i-1}w^{n-i}(\sqrt{2}u^{2}+2\sqrt{\frac{1}{2}u^{4}-2w})^{\frac{\sqrt{2}}{2}k_{1}-1}\frac{u^{2}}{\sqrt{\frac{1}{2}u^{4}-2w}}\\
&+\sum\limits^{n}_{i=1}\frac{1}{2}m(n-i)a_{i}v^{2i-1}w^{n-i-1}(\sqrt{2}u^{2}+2\sqrt{\frac{1}{2}u^{4}-2w})^{\frac{\sqrt{2}}{2}k_{1}}u^{2}\\
&-\sum\limits^{n}_{i=1}(a+1)(n-i)a_{i}v^{2i-1}w^{n-i-1}(\sqrt{2}u^{2}+2\sqrt{\frac{1}{2}u^{4}-2w})^{\frac{\sqrt{2}}{2}k_{1}}u^{2}\\
&+\sum\limits^{n}_{i=1}(n-i)a_{i}v^{2i}w^{n-i-1}(\sqrt{2}u^{2}+2\sqrt{\frac{1}{2}u^{4}-2w})^{\frac{\sqrt{2}}{2}k_{1}}\\
&-\sum\limits^{n}_{i=1}\sqrt{2}k_{1}ca_{i}v^{2i-1}w^{n-i}(\sqrt{2}u^{2}+2\sqrt{\frac{1}{2}u^{4}-2w})^{\frac{\sqrt{2}}{2}k_{1}-1}\\
&+\sum\limits^{n}_{i=1}(n-i)ca_{i}v^{2i-1}w^{n-i-1}(\sqrt{2}u^{2}+2\sqrt{\frac{1}{2}u^{4}-2w})^{\frac{\sqrt{2}}{2}k_{1}}\sqrt{\frac{1}{2}u^{4}-2w}.
\end{align*}

Integrating this equation,  we have
\begin{align*}
\overline F_{1}&=(\sqrt{2}u^{2}+2\sqrt{\frac{1}{2}u^{4}-2w})^{\frac{\sqrt{2}}{2}k_{1}}(C\\
&+\sum\limits^{n}_{i=1}k_{0}a_{i}v^{2i-1}w^{n-i}\int\frac{1}{\sqrt{\frac{1}{2}u^{4}-2w}}du\\
&+\sum\limits^{n}_{i=1}bd(2i-1)a_{i}v^{2i-1}w^{n-i}\int\frac{1}{\sqrt{\frac{1}{2}u^{4}-2w}}du\\
&+\sum\limits^{n}_{i=1}\frac{1}{2}mbd(2i-1)a_{i}v^{2i-2}w^{n-i}\int\frac{u^{2}}{\sqrt{\frac{1}{2}u^{4}-2w}}du\\
&-\sum\limits^{n}_{i=1}\sqrt{2}k_{1}a_{i}v^{2i}w^{n-i}\int(\sqrt{2}u^{2}+2\sqrt{\frac{1}{2}u^{4}-2w})^{-1}\frac{1}{\sqrt{\frac{1}{2}u^{4}-2w}}du\\
&+\sum\limits^{n}_{i=1}(\sqrt{2}(a+1)-\frac{\sqrt{2}}{2}m)k_{1}a_{i}v^{2i-1}w^{n-i}\int(\sqrt{2}u^{2}+2\sqrt{\frac{1}{2}u^{4}-2w})^{-1}\frac{u^{2}}{\sqrt{\frac{1}{2}u^{4}-2w}}du\\
&-\sum\limits^{n}_{i=1}(\frac{1}{3}(a+1)-\frac{1}{6}m)(n-i)a_{i}v^{2i-1}w^{n-i-1}u^{3}\\
&+\sum\limits^{n}_{i=1}(n-i)a_{i}v^{2i}w^{n-i-1}u\\
&-\sum\limits^{n}_{i=1}\sqrt{2}k_{1}ca_{i}v^{2i-1}w^{n-i}\int(\sqrt{2}u^{2}+2\sqrt{\frac{1}{2}u^{4}-2w})^{-1}du\\
&+\sum\limits^{n}_{i=1}(n-i)ca_{i}v^{2i-1}w^{n-i-1}\int\sqrt{\frac{1}{2}u^{4}-2w}du.
\end{align*}

From the formulas in Appendix, every integrals in the expression of $\overline{F_{1}}$ can be expressed by
the combination of the two integrals $\int\frac{1}{\sqrt{\frac{1}{2}u^{4}-2w}}du$ and $\int\frac{u^{2}}{\sqrt{\frac{1}{2}u^{4}-2w}}du$. Since $F_{1}$ is a weight homogeneous polynomial, $\int\frac{1}{\sqrt{\frac{1}{2}u^{4}-2w}}du$ and $\int\frac{u^{2}}{\sqrt{\frac{1}{2}u^{4}-2w}}du$ are not polynomials,  then the coefficients of $\int\frac{1}{\sqrt{\frac{1}{2}u^{4}-2w}}du$ and $\int\frac{u^{2}}{\sqrt{\frac{1}{2}u^{4}-2w}}du$ should be vanish. Using the formulas in Appendix,  we can write the term with the integral $\int\frac{u^{2}}{\sqrt{\frac{1}{2}u^{4}-2w}}du$ in $\overline{F}_1$ as
$$\sum^n_{i=1}\frac{1}{4}k_{1}a_{i}v^{2i}w^{n-i-1}\int\frac{u^{2}}{\sqrt{\frac{1}{2}u^{4}-2w}}du=0, $$
which implies $k_{1}=0$.

Secondly, taking $F_0$ with $G_0$ in \eqref{e8}, we have
$$ F_{0}=\sum\limits^{n}_{i=1}a_{i}(y-\frac{1}{2}mx^{2})^{2i}(\frac{1}{4}x^{4}-
\frac{1}{2}z^{2})^{n-i}(\sqrt{2}x^{2}+2z)^{\frac{\sqrt{2}}{2}k_{1}}.
$$
By the same way which we used in the first part,  we can also prove $k_{1}=0$.
\end{proof}

\subsection{ Darboux polynomials with nonzero cofactor $k_{1}=0, k_{0}\neq0$}

In this subsection, we shall present the Darboux polynomials of system \eqref{e1} with nonzero cofactor and the conditions for their existence.

Now by Lemma \ref{l6} and \eqref{F0},  we have $F_{0}=G_{0}$,  i.e.
\begin{equation}\label{e9}
F_{0}=\sum\limits^{n}_{i=1}a_{i}(y-\frac{1}{2}mx^{2})^{2i-1}(\frac{1}{4}x^{4}-\frac{1}{2}z^{2})^{n-i},
\end{equation}
or
\begin{equation}\label{e10}
F_{0}=\sum\limits^{n}_{i=0}a_{i}(y-\frac{1}{2}mx^{2})^{2i}(\frac{1}{4}x^{4}-\frac{1}{2}z^{2})^{n-i}.
\end{equation}

Notice that the weight degree $l$ of $F_{0}$ in the form \eqref{e9} and \eqref{e10} is $4n-2$ and $4n$,  respectively.
\begin{remark}\label{r1}
We claim that not all of $a_i$ are zero, that is, $F_0\not\equiv 0$. Otherwise, in view of $k_1=0$, it follows from \eqref{e3} that $F_0\equiv 0$ implies $F\equiv 0$.
\end{remark}

\begin{lemma}\label{l2}
When $F_{0}$ is in the form \eqref{e9},  system \eqref{e24} doesn't have Darboux polynomial with nonzero cofactor.
\end{lemma}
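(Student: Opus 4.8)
The plan is to run the recursion \eqref{e3} order by order, starting from the odd leading part $F_0$ in \eqref{e9}, and to show that the demand that every $F_j$ be a genuine weight-homogeneous polynomial is so rigid that it forces all the $a_i$ to vanish, contradicting Remark \ref{r1}. Since Lemma \ref{l6} gives $k_1=0$, the first equation of \eqref{e3} yields $F_0=G_0$ exactly as in \eqref{e9}, and by Remark \ref{r1} at least one $a_i\neq 0$; write $i_{\min}$ for the smallest index with $a_{i_{\min}}\neq 0$.

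First I would substitute this $F_0$ into the second equation of \eqref{e3}, pass to the variables \eqref{e4}--\eqref{e5}, and integrate in $u$ as in the proof of Lemma \ref{l6}, now with $k_1=0$. All integrals carrying the factor $(\sqrt2\,u^2+2\sqrt{\tfrac12 u^4-2w})^{\frac{\sqrt2}{2}k_1}$ disappear, and after reducing $\int\sqrt{\tfrac12 u^4-2w}\,du$ by the Appendix formula, $\overline{F_1}$ becomes a polynomial plus multiples of the two non-polynomial integrals $\int(\tfrac12 u^4-2w)^{-1/2}\,du$ and $\int u^2(\tfrac12 u^4-2w)^{-1/2}\,du$. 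Polynomiality of $F_1$ forces the coefficient of each to vanish identically in $v,w$; matching the independent monomials gives, for every $i$,
\[
\tfrac12 mbd(2i-1)a_i=0,\qquad \bigl[k_0+(2i-1)bd-\tfrac43 c(n-i)\bigr]a_i=0.
\]
I would also record that the homogeneous part of $\overline{F_1}$ must be a weight-homogeneous polynomial in $v$ (weight $2$) and $w$ (weight $4$) of the \emph{odd} weight degree $l-1=4n-3$; no such polynomial exists, so this part is zero and $F_1$ is completely determined. A short computation shows $F_1$ contains only the odd powers $x,x^3,xz$ in $x$, whereas $F_0$ involves $x$ only through $x^2,x^4$.

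The decisive step is the next order. Feeding the explicit $F_1$ into the $j=2$ equation of \eqref{e3} and dividing by $z$, the term $-bx\,\partial_y F_0$ (together with the odd-in-$x$ part of $F_1$) introduces, for the first time, odd powers of $u$ over $z$; after the Appendix reductions these generate a \emph{new}, elementary (logarithmic) secular integral $\int u(\tfrac12 u^4-2w)^{-1/2}\,du$, independent of the two elliptic ones and of all polynomials, which did not appear at order one because $F_0$ is even in $x$. For $F_2$ to be a polynomial its coefficient must vanish, and its part of top weight $w^{\,n-i_{\min}}$ receives a contribution only from $-bx\,\partial_y F_0$ — every $F_1$-derived source carries $w$-weight at most $n-i_{\min}-1$ — namely $-b(2i_{\min}-1)a_{i_{\min}}$, forcing $b=0$. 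Once $b=0$ the second relation above reduces to $[k_0-\tfrac43 c(n-i)]a_i=0$, so two distinct active indices force $c=0$ and hence $k_0=0$; thus only a single index $i_0$ survives, with $k_0=\tfrac43 c(n-i_0)$ and necessarily $c\neq0,\ i_0<n$. This remaining case I would close by pushing the recursion one step further (with $b=0$ the order-two source loses all its $b$-terms, and requiring the two elliptic secular coefficients of $F_2$ to vanish yields relations incompatible with $k_0=\tfrac43 c(n-i_0)\neq0$); alternatively, since $b=0$ makes $v=y-\tfrac12 mx^2$ a first integral ($\dot v=\dot y-mx\dot x=mxz-mxz=0$), one restricts to a regular level $v=c_1$ and rules out a constant-cofactor Darboux polynomial of the resulting planar Li\'enard-type system. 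Either way $k_0=0$ contradicts $k_0\neq0$, so all $a_i$ vanish, against Remark \ref{r1}.

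The main obstacle is the order-two bookkeeping: one must expand $k_0F_1+bdy\,\partial_y F_1+[(a+1)x^2-y-cz]\,\partial_z F_1-bx\,\partial_y F_0-ax\,\partial_z F_0$ in the monomials $v^{p}w^{q}u^{r}z^{s}$, track every odd power of $u$ through the Appendix reduction formulas, and confirm that only $-bx\,\partial_y F_0$ reaches the top $w$-weight of the new logarithmic secular term. Separating the contributions to the three independent transcendental integrals is the technical heart of the argument; once it is in place, the chain $b=0\Rightarrow$ single index $\Rightarrow k_0=0$ closes the proof.
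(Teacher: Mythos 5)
Your proposal follows the paper's route almost exactly: use Lemma \ref{l6} to set $k_{1}=0$, solve the $j=1$ equation of \eqref{e3} to get the relations $mbd\,a_i=0$ and $\bigl[k_0+(2i-1)bd-\tfrac43c(n-i)\bigr]a_i=0$ (the paper's \eqref{e11}) together with the parity argument killing $\overline{G}_1$, and then read off the vanishing of the coefficient of the logarithmic integral $\int u\,(\tfrac12u^4-2w)^{-1/2}\,du$ at order $j=2$. Your extraction of $b=0$ from the lowest-$v$ monomial $v^{2i_{\min}-2}w^{\,n-i_{\min}}$ is a correct, slightly reorganized version of the paper's identity \eqref{E0}, whose isolated term $-ba_1w^{n-1}$ (respectively the $i=i_{\min}-1$ component when $i_{\min}>1$) yields the same conclusion; your accounting that only $-bx\,\partial_yF_0$ reaches that monomial is right, provided one also invokes $mbd\,a_i=0$ to discard the $y=v+\tfrac12mu^2$ cross-terms, which you implicitly do.

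The one genuine loose end is how you close the case $b=0$ with a single surviving index $i_0$ and $k_0=\tfrac43c(n-i_0)\neq0$. Your two proposed closures are not carried out: the claim that ``the two elliptic secular coefficients of $F_2$'' give incompatible relations is unverified (and at order $j=2$ the secular integral is the logarithmic one, not the elliptic pair, which by parity reappears only at $j=3$), and the planar Li\'enard reduction on a level set of $v$ is only sketched. But no new order is needed: the \emph{full} coefficient of $\int u\,(\tfrac12u^4-2w)^{-1/2}\,du$ at $j=2$, which you computed only in its top $w$-weight component, also contains the components $(bd+c)(n-i)a_i\,v^{2i}w^{\,n-i-1}$ for $i=1,\dots,n-1$ (this is the rest of the paper's \eqref{E0}). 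With $b=0$ these force $c(n-i_0)a_{i_0}=0$, hence $c=0$ and $k_0=0$ whenever $i_0<n$, while $i_0=n$ gives $k_0=\tfrac43c(n-n)=0$ directly; either way you contradict $k_0\neq0$. So the gap is repairable from the computation you already set up, but as written the last step of your argument is asserted rather than proved.
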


\begin{proof}

Substituting $F_{0}$ of \eqref{e9} into the second equation of \eqref{e3},  we obtain that
\begin{align*}
L[F_{1}]=&\sum\limits^{n}_{i=1}(k_{0}+(2i-1)bd)a_{i}(y-\frac{1}{2}mx^{2})^{2i-1}(\frac{1}{4}x^{4}-\frac{1}{2}z^{2})^{n-i} \\
&+\sum\limits^{n}_{i=1}(\frac{1}{2}m-(a+1))(n-i)a_{i}(y-\frac{1}{2}mx^{2})^{2i-1}(\frac{1}{4}x^{4}-\frac{1}{2}z^{2})^{n-i-1}x^{2}z\\
&+\sum\limits^{n}_{i=1}(n-i)a_{i}(y-\frac{1}{2}mx^{2})^{2i}(\frac{1}{4}x^{4}-\frac{1}{2}z^{2})^{n-i-1}z\\
&+\sum\limits^{n}_{i=1}c(n-i)a_{i}(y-\frac{1}{2}mx^{2})^{2i-1}(\frac{1}{4}x^{4}-\frac{1}{2}z^{2})^{n-i-1}z^{2}\\
&+\sum\limits^{n}_{i=1}\frac{1}{2}mbd(2i-1)a_{i}(y-\frac{1}{2}mx^{2})^{2i-1}(\frac{1}{4}x^{4}-\frac{1}{2}z^{2})^{n-i}x^{2}
\end{align*}

Through the variable changes \eqref{e4} and \eqref{e5},   we obtain the following ordinary linear differential equation with respect to $u$ for fixed $v$ and $w$
\begin{align*}
\frac{d\overline F_{1}}{du}=&\sum\limits^{n}_{i=1}(k_{0}+(2i-1)bd)a_{i}v^{2i-1}w^{n-i}\frac{1}{\sqrt{\frac{1}{2}u^{4}-2w}} \\
&+\sum\limits^{n}_{i=1}(\frac{1}{2}m-(a+1))(n-i)a_{i}v^{2i-1}w^{n-i-1}u^{2}\\
&+\sum\limits^{n}_{i=1}(n-i)a_{i}v^{2i}w^{n-i-1}\\
&+\sum\limits^{n}_{i=1}c(n-i)a_{i}v^{2i-1}w^{n-i-1}\sqrt{\frac{1}{2}u^{4}-2w}\\
&+\sum\limits^{n}_{i=1}\frac{1}{2}mbd(2i-1)a_{i}v^{2i-1}w^{n-i}\frac{u^{2}}{\sqrt{\frac{1}{2}u^{4}-2w}}.
\end{align*}
Using formulas presented in the Appendix,  we solve this equation and obtain
\begin{align*}
\overline F_{1}=&\sum\limits^{n}_{i=1}(k_{0}+(2i-1)bd-\frac{4}{3}c(n-i))a_{i}v^{2i-1}w^{n-i}\int\frac{1}{\sqrt{\frac{1}{2}u^{4}-2w}}du \\
&+\sum\limits^{n}_{i=1}\frac{1}{2}mbd(2i-1)a_{i}v^{2i-1}w^{n-i}\int\frac{u^{2}}{\sqrt{\frac{1}{2}u^{4}-2w}}du\\
&-\sum\limits^{n}_{i=1}\frac{1}{3}(\frac{1}{2}m-(a+1))(n-i)a_{i}v^{2i-1}w^{n-i-1}u^{3}\\
&+\sum\limits^{n}_{i=1}(n-i)a_{i}v^{2i}w^{n-i-1}u\\
&+\sum\limits^{n}_{i=1}\frac{1}{3}c(n-i)a_{i}v^{2i-1}w^{n-i-1}u\sqrt{\frac{1}{2}u^{4}-2w}+\overline{G}_{1}(v, w),
\end{align*}
where $\overline{G}_{1}(v, w)$ is an arbitrary polynomial in $v$ and $w$. $G_1(x,y,z)=\overline{G}_{1}(v, w)$ is a weight polynomial with weight degree even. Noticing that $F_{1}$ is a weight homogeneous polynomial of weight degree $4n-3$,  then we have $\overline{G}_{1}(v, w)=0$,  and the coefficients of $\int\frac{1}{\sqrt{\frac{1}{2}u^{4}-2w}}du$ and $\int\frac{u^{2}}{\sqrt{\frac{1}{2}u^{4}-2w}}du$ must vanish. Then we have for $i=1,2, \ldots, n$,
\begin{equation}\label{e11}
(k_{0}+(2i-1)bd-\frac{4}{3}c(n-i))a_i=0 \quad mbda_{i}=0,i=1,2,\ldots,n,
\end{equation}
and
\begin{align*}
F_{1}=&\sum\limits^{n}_{i=1}\frac{1}{3}(\frac{1}{2}m-(a+1))(n-i)a_{i}(y-\frac{1}{2}mx^{2})^{2i-1}(\frac{1}{4}x^{4}-\frac{1}{2}z^{2})^{n-i-1}x^{3} \\
&+\sum\limits^{n}_{i=1}(n-i)a_{i}(y-\frac{1}{2}mx^{2})^{2i}(\frac{1}{4}x^{4}-\frac{1}{2}z^{2})^{n-i-1}x \\
&+\sum\limits^{n}_{i=1}\frac{1}{3}c(n-i)a_{i}(y-\frac{1}{2}mx^{2})^{2i-1}(\frac{1}{4}x^{4}-\frac{1}{2}z^{2})^{n-i-1}xz.
\end{align*}
Furthermore from Remark \ref{r1}, $F_0\not\equiv 0$ implies that there is at least $a_{i}\not=0$, then
$$
k_{0}=\frac{4}{3}c(n-i)-(2i-1)bd.
$$

Consider $F_2$ in \eqref{e3} with $j=2$. We observe that there is only a term $k_0F_1$ in \eqref{e3} with $j=2$, which includes $k_0$. Hence, we always have $k_0a_i$ after substituting $F_1$ into the equation. Without loss of generality, we can substitute $k_{0}=\frac{4}{3}c(n-i)-(2i-1)bd$, $i=1,2,\ldots,n$ into the equation. Then solving this ordinary differential equation with respect to $u$ for fixed $v$ and $w$,  we get
\begin{align*}
\overline F_{2}=&\sum\limits^{n}_{i=1}(\frac{4}{3}(n-i)c+bd-\frac{1}{3}c)(n-i)a_{i}v^{2i}w^{n-i-1}\int\frac{u}{\sqrt{\frac{1}{2}u^{4}-2w}}du \\
&-\sum\limits^{n}_{i=1}b(2i-1)a_{i}v^{2i-2}w^{n-i}\int\frac{u}{\sqrt{\frac{1}{2}u^{4}-2w}}du\\
&-\sum\limits^{n}_{i=1}\frac{4}{3}c(n-i)(n-i-1)a_{i}v^{2i}w^{n-i-1}\int\frac{u}{\sqrt{\frac{1}{2}u^{4}-2w}}du \\
&+\sum\limits^{n}_{i=1}\frac{1}{2}(\frac{4}{9}(n-i)c^{2}-\frac{1}{3}c^{2}+a)(n-i)a_{i}v^{2i-1}w^{n-i-1}u^{2}\\
&+\sum\limits^{n}_{i=1}\frac{1}{2}(n-i)(n-i-1)a_{i}v^{2i+1}w^{n-i-2}u^{2} \\
&-\sum\limits^{n}_{i=1}\frac{1}{3}(a+1)(n-i)(n-i-1)a_{i}v^{2i}w^{n-i-2}u^{4} \\
&+\sum\limits^{n}_{i=1}\frac{1}{6}m(n-i)(n-i-1)a_{i}v^{2i}w^{n-i-2}u^{4} \\
&+\sum\limits^{n}_{i=1}(\frac{1}{18}(a+1)^{2}-\frac{1}{18}m(a+1)+\frac{1}{72}m^{2})(n-i)(n-i-1)a_{i}v^{2i-1}w^{n-i-2}u^{6} \\
&+\sum\limits^{n}_{i=1}(\frac{4}{9}(n-i)+\frac{1}{3})c(a+1)(n-i)a_{i}v^{2i-1}w^{n-i-1}\sqrt{\frac{1}{2}u^{4}-2w} \\
&+\sum\limits^{n}_{i=1}(\frac{2}{9}(n-i)-\frac{1}{6})mc(n-i)a_{i}v^{2i-1}w^{n-i-1}\sqrt{\frac{1}{2}u^{4}-2w} \\
&+\sum\limits^{n}_{i=1}\frac{1}{3}c(n-i)(n-i-1)a_{i}v^{2i}w^{n-i-2}u^{2}\sqrt{\frac{1}{2}u^{4}-2w} \\
&-\sum\limits^{n}_{i=1}(\frac{1}{18}m-\frac{1}{9}(a+1))c(n-i)(n-i-1)a_{i}v^{2i-1}w^{n-i-2}(u^{4}-4w)\sqrt{(\frac{1}{2}u^{4}-2w)} \\
&+\sum\limits^{n}_{i=1}\frac{1}{3}c^{2}(n-i)(n-i-1)a_{i}v^{2i-1}w^{n-i-2}(\frac{1}{12}u^{6}-wu^{2})+\overline{G}_{2}(v, w),
\end{align*}
where  $\overline{G}_{2}(v, w)$ is a polynomial.

Because $F_{2}$ is weight homogeneous polynomial of weight degree $4n-4$,  the coefficients of $\int\frac{u}{\sqrt{\frac{1}{2}u^{4}-2w}}du$ vanish. Collecting the coefficients of $\int\frac{u}{\sqrt{\frac{1}{2}u^{4}-2w}}du$ in the expression of $\overline{F_{2}}$,  we have
\begin{equation}\label{E0}
\sum\limits^{n-1}_{i=1}((bd+c)(n-i)a_{i}-b(2i+1)a_{i+1})v^{2i}w^{n-i-1}-ba_{1}w^{n-1}=0,
\end{equation}
which implies $ba_{1}=0$ and
$$(bd+c)(n-i)a_{i}-b(2i+1)a_{i+1}=0,\ \ i=1,\ldots,n-1.$$

If $b\neq0$, then $a_{1}=0$, it indicts $a_{i}=0,i=1,\ldots,n$. It contradicts with Remark \ref{r1}.

If $b=0$, \eqref{E0} becomes
$$\sum\limits^{n-1}_{i=1}c(n-i)a_{i}v^{2i}w^{n-i-1}=0.$$
Then $c=0$, it contradicts with \eqref{e11} with $k_{0}\not=0.$
\end{proof}

From Lemma \ref{l2}, we shall take $F_0$ in \eqref{e10} for Darboux polynomials with  nonzero cofactor.

\begin{remark}\label{r2}
 It follows from Lemma \ref{l2} that the highest weight degree in the expansion \eqref{fF} of the Darboux polynomial $f$ with nonzero cofactor should be $4n,n\in\mathbb{N}$.
\end{remark}

\begin{lemma}\label{l3'}
If $F_{0}$ is in the form \eqref{e10}, parameters in system \eqref{e1} having Darboux polynomials with nonzero cofactors satisfy one of the following conditions:
\begin{description}
\item{(1)} $a=-1, bd=-c, b=\frac{2}{27}c^{3}-\frac{1}{3}c,c\neq 0$;
\item{(2)}$-\frac{1}{81}c^{2}-\frac{1}{27}a^{2}+\frac{4}{27}a-\frac{1}{27}=0,  bd=-c,  b=\frac{2}{27}c^{3}-\frac{1}{9}a^{2}c+\frac{1}{9}ac-\frac{1}{9}c, c\neq 0$;
\item{(3)} $a=-1, bd=-\frac{2}{3}c, b=\frac{2}{27}c^{3}-\frac{1}{3}c,c\neq 0$;
\item{(4)}$-\frac{1}{81}c^{2}-\frac{1}{27}a^{2}+\frac{4}{27}a-\frac{1}{27}=0,  bd=-\frac{2}{3}c,  b=\frac{2}{27}c^{3}-\frac{1}{9}a^{2}c+\frac{1}{9}ac-\frac{1}{9}c, c\neq 0$.
\end{description}
\end{lemma}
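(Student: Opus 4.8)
The plan is to mirror the computation carried out for Lemma \ref{l2}, but now with $F_0$ taken in the form \eqref{e10}, and to extract from the solvability constraints the four parameter regimes rather than a contradiction. Since by Lemma \ref{l6} the cofactor is the constant $k_0$ (so $k_1=0$), I would substitute
$$F_0=\sum_{i=0}^{n}a_i\Big(y-\tfrac12 mx^2\Big)^{2i}\Big(\tfrac14x^4-\tfrac12z^2\Big)^{n-i}$$
into the $j=1$ equation of \eqref{e3}, pass to the characteristic coordinates \eqref{e4}--\eqref{e5}, and integrate the resulting linear ODE for $\overline{F_1}$ in $u$. As in Lemma \ref{l2}, every integral that appears reduces, via the formulas in the Appendix, to the two non-polynomial integrals $\int\frac{1}{\sqrt{\frac12u^4-2w}}\,du$ and $\int\frac{u^2}{\sqrt{\frac12u^4-2w}}\,du$; because $F_1$ must be a weight homogeneous polynomial, the coefficients of both are forced to vanish.

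The first stage already produces the coarse structure. Forcing the coefficient of $\int\frac{u^2}{\sqrt{\cdots}}\,du$ to vanish yields an $m$-dependent relation that is automatically satisfied once we specialise to $m=0$ for system \eqref{e1}, while forcing the coefficient of $\int\frac{1}{\sqrt{\cdots}}\,du$ to vanish gives, for every index $i$ with $a_i\neq 0$, an identity of the form
$$k_0=\tfrac43 c\,(n-i)-2i\,bd.$$
Since $k_0$ is a single constant and, by Remark \ref{r1}, not all $a_i$ vanish, this splits the argument: if two distinct indices carry nonzero $a_i$, comparing the two identities forces $bd=-\tfrac23 c$ (and then $k_0=\tfrac43 cn$ is consistent for every $i$), whereas the complementary possibility, with at most one surviving index, is the branch that will ultimately carry the regime $bd=-c$. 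This is the origin of the dichotomy between conditions (1),(2) and (3),(4).

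To pin down the relation $b=\tfrac{2}{27}c^3-\tfrac19a^2c+\tfrac19ac-\tfrac19c$ and the accompanying constraint on $(a,c)$, I would push the recursion to the next equations ($j=2$ and, where needed, $j=3$) of \eqref{e3}: substitute the polynomial $F_1$ just obtained, integrate again, and demand that the coefficient of the surviving odd integral $\int\frac{u}{\sqrt{\cdots}}\,du$ vanish. This produces a homogeneous linear recurrence in the $a_i$, and requiring it to admit a nontrivial solution (again Remark \ref{r1}) is exactly what imposes the cubic relation for $b$ together with a quadratic relation linking $a$ and $c$. The branch $a=-1$ must be treated separately, because the factor $(a+1)$ multiplying $x^2$ in the term $[(a+1)x^2-y-cz]\,\partial_z F_0$ degenerates there and collapses part of the recurrence; this degeneracy is what separates conditions (1),(3), where $a=-1$, from conditions (2),(4), where the quadratic in $a,c$ holds. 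Crossing the two splits gives the four listed cases.

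The main obstacle I anticipate is not any single integration but the bookkeeping of the case analysis: after each integration one must reduce a growing collection of integrals through the Appendix formulas, keep track of which monomials $v^{2i}w^{n-i}$ the surviving non-polynomial coefficients attach to, and then solve the recurrences in the $a_i$ while verifying that the cases are exhaustive and that $k_0\neq 0$ is retained throughout. In particular, handling the at-most-one-surviving-index branch carefully --- showing that consistency of the higher recursions forces $bd=-c$ rather than permitting other values --- and isolating the degenerate branch $a=-1$ are the delicate points, since it is precisely there that the exact algebraic relations among $a,b,c,d$ are extracted.
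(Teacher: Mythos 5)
Your proposal is correct and follows essentially the same route as the paper: substitute $F_0$ of the form \eqref{e10} into the recursion \eqref{e3}, kill the coefficients of the non-polynomial integrals at each order, obtain the dichotomy $bd=-c$ (only $a_0$ survives) versus $bd=-\frac{2}{3}c$ (all $a_i\neq 0$) from the $j=1$ and $j=2$ conditions, and then extract $b=\frac{2}{27}c^{3}-\frac{1}{9}a^{2}c+\frac{1}{9}ac-\frac{1}{9}c$ together with the factorized constraint $(a+1)\cdot(\text{quadratic in }a,c)=0$ from the deeper equations. The one correction is that the recursion must be pushed one step further than your ``$j=2$ and, where needed, $j=3$'': the cubic formula for $b$ comes from the vanishing coefficient of $\int\frac{u^{2}}{\sqrt{\frac{1}{2}u^{4}-2w}}\,du$ at $j=3$, but the quadratic relation between $a$ and $c$ (equivalently the alternative $a=-1$) only emerges from the vanishing coefficient of $\int\frac{u}{\sqrt{\frac{1}{2}u^{4}-2w}}\,du$ at $j=4$.
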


\begin{proof}

Substituting $F_0$ of \eqref{e10} into the second equation of \eqref{e3}, then solving it,  we get the following equations by using formulas in the Appendix
\begin{align}\label{e23}
\overline F_{1}=&\sum\limits^{n}_{i=0}(k_{0}+2ibd-\frac{4}{3}(n-i)c)a_{i}v^{2i}w^{n-i}\int\frac{1}{\sqrt{\frac{1}{2}u^{4}-2w}}du\nonumber\\
&+\sum\limits^{n}_{i=0}mbdia_{i}v^{2i-1}w^{n-i}\int\frac{u^{2}}{\sqrt{\frac{1}{2}u^{4}-2w}}du\nonumber\\
&-\sum\limits^{n}_{i=0}\frac{1}{3}(a+1)(n-i)a_{i}v^{2i}w^{n-i-1}u^{3}\\
&+\sum\limits^{n}_{i=0}(n-i)a_{i}v^{2i+1}w^{n-i-1}u\nonumber\\
&+\sum\limits^{n}_{i=0}\frac{1}{3}c(n-i)a_{i}v^{2i}w^{n-i-1}u\sqrt{\frac{1}{2}u^{4}-2w}\nonumber\\
&+\sum\limits^{n}_{i=0}\frac{1}{6}m(n-i)a_{i}v^{2i}w^{n-i-1}u^{3}+\overline{G}_{1}(v, w),\nonumber
\end{align}
where $\overline{G}_{1}(v, w)$ is an arbitrary polynomial in $v$ and $w$. $G_1(x,y,z)=\overline{G}_{1}(v, w)$ is a weight polynomial of weight degree even. However, $F_{1}$ is a weight homogeneous polynomial of weight degree $4n-1$, then $\overline{G}_{1}(v, w)=0$,  and the coefficients of $\int\frac{1}{\sqrt{\frac{1}{2}u^{4}-2w}}du$ and $\int\frac{u^{2}}{\sqrt{\frac{1}{2}u^{4}-2w}}du$ must vanish. Then we have $i=0,1, \ldots, n$
\begin{equation}\label{e20}
(k_{0}+2ibd-\frac{4}{3}c(n-i))a_{i}=0, \quad imbda_{i}=0,
\end{equation}
and from \eqref{e23}
\begin{align}\label{e13}
F_{1}=&-\sum\limits^{n}_{i=0}\frac{1}{3}(a+1)(n-i)a_{i}(y-\frac{1}{2}mx^{2})^{2i}(\frac{1}{4}x^{4}-\frac{1}{2}z^{2})^{n-i-1}x^{3}\nonumber\\
&+\sum\limits^{n}_{i=0}(n-i)a_{i}(y-\frac{1}{2}mx^{2})^{2i+1}(\frac{1}{4}x^{4}-\frac{1}{2}z^{2})^{n-i-1}x \\
&+\sum\limits^{n}_{i=0}\frac{1}{3}c(n-i)a_{i}(y-\frac{1}{2}mx^{2})^{2i}(\frac{1}{4}x^{4}-\frac{1}{2}z^{2})^{n-i-1}xz\nonumber\\
&+\sum\limits^{n}_{i=0}\frac{1}{6}m(n-i)a_{i}(y-\frac{1}{2}mx^{2})^{2i}(\frac{1}{4}x^{4}-\frac{1}{2}z^{2})^{n-i-1}x^{3}.\nonumber
\end{align}
Due to Remark \ref{r1} and the first equation in \eqref{e20}, then
\begin{equation}\label{e12}
k_{0}=\frac{4}{3}(n-i)c-2ibd=\frac{4}{3}nc-2i(bd+\frac{2}{3}c).
\end{equation}
By the same method which has been used in the proof of Lemma \ref{l2}, solving $\overline F_{2}$ in \eqref{e3} with $j=2$,  we get
\begin{align*}
\overline F_{2}=&\sum\limits^{n}_{i=0}(\frac{4}{3}(n-i)c+bd-\frac{1}{3}c)(n-i)a_{i}v^{2i+1}w^{n-i-1}\int\frac{u}{\sqrt{\frac{1}{2}u^{4}-2w}}du \\
&-\sum\limits^{n}_{i=0}2bia_{i}v^{2i-1}w^{n-i}\int\frac{u}{\sqrt{\frac{1}{2}u^{4}-2w}}du\\
&-\sum\limits^{n}_{i=0}\frac{4}{3}c(n-i)(n-i-1)a_{i}v^{2i+1}w^{n-i-1}\int\frac{u}{\sqrt{\frac{1}{2}u^{4}-2w}}du \\
&+\sum\limits^{n}_{i=0}\frac{1}{2}(\frac{1}{3}k_{0}c+\frac{2}{3}bdic-\frac{1}{3}c^{2}+a)(n-i)a_{i}v^{2i}w^{n-i-1}u^{2}\\
&+\sum\limits^{n}_{i=0}\frac{1}{2}(n-i)(n-i-1)a_{i}v^{2i+2}w^{n-i-2}u^{2} \\
&-\sum\limits^{n}_{i=0}(\frac{1}{3}(a+1)+\frac{1}{8}m)(n-i)(n-i-1)a_{i}v^{2i+1}w^{n-i-2}u^{4} \\
&+\sum\limits^{n}_{i=0}(\frac{1}{18}(a+1)^{2}-\frac{1}{36}m(a+1)+\frac{1}{72}m^{2})(n-i)(n-i-1)a_{i}v^{2i}w^{n-i-2}u^{6} \\
&+\sum\limits^{n}_{i=0}(\frac{1}{3}(-\frac{4}{3}(n-i)+1)(a+1)-\frac{1}{6}m)c(n-i)a_{i}v^{2i}w^{n-i-1}\sqrt{\frac{1}{2}u^{4}-2w}\\
&-\sum\limits^{n}_{i=0}\frac{1}{9}c(a+1)(n-i)(n-i-1)a_{i}v^{2i}w^{n-i-2}(u^{4}-4w)\sqrt{\frac{1}{2}u^{4}-2w} \\
&+\sum\limits^{n}_{i=0}\frac{1}{3}c(n-i)(n-i-1)a_{i}v^{2i+1}w^{n-i-2}u^{2}\sqrt{\frac{1}{2}u^{4}-2w}\\
&+\sum\limits^{n}_{i=0}\frac{1}{3}c^{2}(n-i)(n-i-1)a_{i}v^{2i}w^{n-i-2}(\frac{1}{12}u^{6}-wu^{2})+\overline{G}_{2}(v, w),
\end{align*}
where  $\overline{G}_{2}(v, w)$ is a weight polynomial.
Since $F_{2}$ is a weight homogeneous polynomial of weight degree $4n-2$, the coefficients of $\int\frac{u}{\sqrt{\frac{1}{2}u^{4}-2w}}du$ must vanish. Adjusting the coefficients of $\int\frac{u}{\sqrt{\frac{1}{2}u^{4}-2w}}du$ in the expression of $\overline{F}_{2}$,  we have
$$\sum\limits^{n}_{i=0}[(bd+c)(n-i)a_{i}-2b(i+1)a_{i+1}]v^{2i}w^{n-i-1}=0,$$
where $a_{n+1}=0$. It implies that for $i=0,1, \ldots, n$,
\begin{equation}\label{e19}
(bd+c)(n-i)a_{i}-2b(i+1)a_{i+1}=0,
\end{equation}
where $a_{n+1}=0$. We consider \eqref{e19} in the two cases, (i) $bd=-c$, (ii) $bd\not=-c$.
\begin{description}
\item{(i)} For $bd=-c$, it follows from equation \eqref{e19} that  $2bia_{i}=0, i=1,2, \ldots, n$. If there exits some $i$ such that $a_{i}\neq0, i=1,2, \ldots, n$,  then $b=0, c=0$,  and $k_{0}=0$ from \eqref{e12}, which contradicts with the cofactor is nonzero. Combing with Remark \ref{r1}, we have $a_{i}=0, i=1,2, \ldots, n$, and $a_{0}\neq0$. In addition, $k_{0}=\frac{4}{3}nc,c\neq0$ from \eqref{e12}.
\item{(ii)} For $bd\neq -c$, if $b=0$ then $c\neq0$ and $c(n-i)a_{i}=0, i=0,1, \ldots, n-1$ from \eqref{e19}. If there exits $i_{0}\in\{0,1, \ldots, n-1\}$ such that $a_{i_{0}}\neq0$,  then $c=0$. It contradicts with $c\neq0$. Thus, $a_{i}=0, i=0,1, \ldots, n-1$, and $a_{n}\neq0$. However, in this condition $k_{0}=\frac{4}{3}(n-n)c=0$, so there exits no Darboux polynomial with nonzero cofactor when $b=0$. Therefore, $b\neq 0.$

 If $b\neq0$, it follows from \eqref{e19} that $a_{i}\neq0$, $i=0,1, \ldots, n$. Besides, by the fact that the cofactor $k_0$ of one Darboux polynomial is unique, it easy to get from \eqref{e12} that
 $$bd=-\frac{2}{3}c\ \mbox{and}\ k_{0}=\frac{4}{3}nc,$$
 which implies that $c\neq0$. In addition, it follows from  $ibdma_{i}=0$ in \eqref{e20} that $m=0$.
\end{description}

By the previous analysis, we know that if system \eqref{e24} have Darboux polynomials $f$ with nonzero cofactor $k_0$,  then there are two cases for $F_{0}$ corresponding to $f$
\begin{description}
\item{(i)} $a_{0}\neq0,  a_{i}=0, i=1, 2, \ldots, n$ and $bd=-c,k_{0}=\frac{4}{3}nc, c\neq 0$;
\item{(ii)}$a_{i}\neq0,  i=0, 1, \ldots, n$ and $bd=-\frac{2}{3}c,k_{0}=\frac{4}{3}nc, c\neq 0$, $m=0$.
\end{description}

In the following,  we discuss these two cases, respectively.

First, for the case (i), let $m=0$, then we have from \eqref{e10} and \eqref{e13}
$$F_{0}=a_{0}(\frac{1}{4}x^{4}-\frac{1}{2}z^{2})^{n}$$
and
$$F_{1}=-\frac{1}{3}(a+1)na_{0}(\frac{1}{4}x^{4}-\frac{1}{2}z^{2})^{n-1}x^{3}+na_{0}(\frac{1}{4}x^{4}-\frac{1}{2}z^{2})^{n-1}xy+\frac{1}{3}cna_{0}(\frac{1}{4}x^{4}-\frac{1}{2}z^{2})^{n-1}xz.$$
Then solving \eqref{e3} with $j=2$ for fixed $v$ and $w$ and substituting the transformation \eqref{e4},  we get
\begin{align*}
F_{2}=&(-\frac{1}{9}nc^{2}+\frac{1}{6}c^{2}+\frac{1}{2}a)na_{0}(\frac{1}{4}x^{4}-\frac{1}{2}z^{2})^{n-1}x^{2}+\frac{1}{2}n(n-1)a_{0}y^{2}(\frac{1}{4}x^{4}-\frac{1}{2}z^{2})^{n-2}x^{2}\\
&-\frac{1}{3}(a+1)n(n-1)a_{0}y(\frac{1}{4}x^{4}-\frac{1}{2}z^{2})^{n-2}x^{4}\\
&+[\frac{1}{18}(a+1)^{2}+\frac{1}{36}c^{2}]n(n-1)a_{0}(\frac{1}{4}x^{4}-\frac{1}{2}z^{2})^{n-2}x^{6}\\
&-\frac{1}{9}c(a+1)na_{0}(\frac{1}{4}x^{4}-\frac{1}{2}z^{2})^{n-1}z+\frac{1}{3}cn(n-1)a_{0}y(\frac{1}{4}x^{4}-\frac{1}{2}z^{2})^{n-2}x^{2}z\\
&-\frac{1}{9}c(a+1)n(n-1)a_{0}(\frac{1}{4}x^{4}-\frac{1}{2}z^{2})^{n-2}x^{4}z
+G_2(x,y,z),
\end{align*}
where
$G_2(x,y,z)=\overline{G}_2(v,w)=\sum\limits^{n}_{i=1}\overline{a}_{i}y^{2i-1}(\frac{1}{4}x^{4}-\frac{1}{2}z^{2})^{n-i},$
which is an arbitrary weight polynomial with degree $4n-2$.

Next, solving \eqref{e3} with $j=3$ for fixed $v$ and $w$,  we get
\begin{align*}
\overline F_{3}=&(\frac{1}{9}(a+1)na_{0}+\frac{1}{3}\overline{a}_{1})cvw^{n-1}\int\frac{1}{\sqrt{\frac{1}{2}u^{4}-2w}}du\\
&+\sum\limits^{n}_{i=2}(-\frac{2}{3}i+1)c\overline{a}_{i}v^{2i-1}w^{n-i}\int\frac{1}{\sqrt{\frac{1}{2}u^{4}-2w}}du\\
&+(\frac{2}{27}c^{3}-\frac{1}{9}a^{2}c
+\frac{1}{9}ac-\frac{1}{9}c-b)na_{0}w^{n-1}\int\frac{u^{2}}{\sqrt{\frac{1}{2}u^{4}-2w}}du+\overline{R}_{3}(u,v,w),
\end{align*}
where
\begin{align*}
\overline{R}_{3}(u,v,w)=&(\frac{2}{27}n-\frac{1}{9})c^{2}(a+1)na_{0}w^{n-1}u-\frac{1}{3}(a+1)n(n-1)a_{0}v^{2}w^{n-2}u\\
&+(-\frac{1}{9}nc^{2}+\frac{5}{18}c^{2}+\frac{1}{2}a+\frac{1}{9}(a+1)^{2})n(n-1)a_{0}vw^{n-2}u^{3}\\
&+\frac{1}{6}n(n-1)(n-2)a_{0}v^{3}w^{n-3}u^{3}\\
&+(\frac{1}{27}nc^{2}-\frac{1}{9}c^{2}-\frac{1}{6}a)(a+1)n(n-1)a_{0}w^{n-2}u^{5}\\
&-\frac{1}{6}(a+1)n(n-1)(n-2)a_{0}v^{2}w^{n-3}u^{5}\\
&+(\frac{1}{18}(a+1)^{2}+\frac{1}{36}c^{2})n(n-1)(n-2)a_{0}vw^{n-3}u^{7}\\
&-(\frac{1}{162}(a+1)^{2}+\frac{1}{108}c^{2})(a+1)n(n-1)(n-2)a_{0}w^{n-3}u^{9}\\
&-\frac{2}{9}c(a+1)n(n-1)a_{0}vw^{n-2}u\sqrt{\frac{1}{2}u^{4}-2w}\\
&+(-\frac{1}{81}nc^{2}+\frac{1}{27}(a+1)^{2}+\frac{7}{162}c^{2}+\frac{1}{6}a)cn(n-1)a_{0}w^{n-2}u^{3}\sqrt{\frac{1}{2}u^{4}-2w}\\
&+\frac{1}{6}cn(n-1)(n-2)a_{0}v^{2}w^{n-3}u^{3}\sqrt{\frac{1}{2}u^{4}-2w}\\
&-\frac{1}{9}(a+1)cn(n-1)(n-2)a_{0}vw^{n-3}u^{5}\sqrt{\frac{1}{2}u^{4}-2w}\\
&+(\frac{1}{54}(a+1)^{2}+\frac{1}{324}c^{2})cn(n-1)(n-2)a_{0}w^{n-3}u^{7}\sqrt{\frac{1}{2}u^{4}-2w}.
\end{align*}

The coefficients of $\int\frac{1}{\sqrt{\frac{1}{2}u^{4}-2w}}du$ and $\int\frac{u^{2}}{\sqrt{\frac{1}{2}u^{4}-2w}}du$ in $\overline{F}_{3}$ also should vanish. Thus,
\begin{equation}\label{e14}
\overline a_{1}=-\frac{1}{3}(a+1)na_{0}, \quad  \overline a_{i}=0, i=2, \ldots, n
\end{equation}
and
 \begin{equation}\label{e15}
 b=\frac{2}{27}c^{3}-\frac{1}{9}a^{2}c+\frac{1}{9}ac-\frac{1}{9}c.
 \end{equation}

Substituting the variable change \eqref{e4} and the relationship \eqref{e14}, \eqref{e15}, we have $F_{3}(x,y,z)=\overline R_{3}(u,v,w)$.

Continuing to solve \eqref{e3} with $j=4$,  we get
\begin{align*}
\overline F_{4}=(-\frac{1}{81}c^{3}-\frac{1}{27}a^{2}c+\frac{4}{27}ac-
\frac{1}{27}c)(a+1)na_{0}w^{n-1}\int\frac{u}{\sqrt{\frac{1}{2}u^{4}-2w}}du+\overline{R}_{4}(u,v,w),
\end{align*}
where $R_{4}(x,y,z)=\overline{R}_{4}(u,v,w)$ is a weight polynomial of weight degree $4n-4$. Then the coefficient of $\int\frac{u}{\sqrt{\frac{1}{2}u^{4}-2w}}du$ should vanish.
Hence we have
$$-\frac{1}{81}c^{3}-\frac{1}{27}a^{2}c+\frac{4}{27}ac-\frac{1}{27}c=0  \ \mbox{or} \ a=-1.$$

According to the above analysis, for Case (i) $F_{0}=a_{0}(\frac{1}{4}x^{4}-\frac{1}{2}z^{2})^{n}$, there are two different parameter conditions for system \eqref{e1}:
\begin{description}
\item{(1)} $a=-1, bd=-c, b=\frac{2}{27}c^{3}-\frac{1}{3}c,c\neq 0$.
\item{(2)}$-\frac{1}{81}c^{2}-\frac{1}{27}a^{2}+\frac{4}{27}a-\frac{1}{27}=0,  bd=-c,  b=\frac{2}{27}c^{3}-\frac{1}{9}a^{2}c+\frac{1}{9}ac-\frac{1}{9}c, c\neq 0.$
\end{description}

Secondly, we analyze case (ii) for $F_{0}$, it follows from \eqref{e10} and \eqref{e13} that
$$F_{0}=\sum\limits^{n}_{i=0}a_{i}y^{2i}(\frac{1}{4}x^{4}-\frac{1}{2}z)^{n-i}, $$
\begin{align*}
\overline F_{1}&=-\sum\limits^{n}_{i=0}\frac{1}{3}(a+1)(n-i)a_{i}v^{2i}w^{n-i-1}u^{3}+\sum\limits^{n}_{i=0}(n-i)a_{i}v^{2i+1}w^{n-i-1}u\\
&-\sum\limits^{n}_{i=0}\frac{1}{3}c(n-i)a_{i}v^{2i}w^{n-i-1}u\sqrt{\frac{1}{2}u^{4}-2w}.
\end{align*}

Solving \eqref{e3} with $j=2$ for fixed $v,w$, we get
\begin{align*}
\overline F_{2}=&\sum\limits^{n}_{i=0}[\frac{1}{3}c(n-i)a_{i}-2b(i+1)a_{i+1}]v^{2i+1}w^{n-i-1}\int\frac{u}{\sqrt{\frac{1}{2}u^{4}-2w}}du+\overline R_{2}(u,v,w),
\end{align*}
where $\overline R_{2}$ is a weight polynomial of weight degree $4n-2$. Because the coefficients of $\int\frac{u}{\sqrt{\frac{1}{2}u^{4}-2w}}du$ must vanish, we have $i=0,1,\ldots,n$
$$\frac{1}{3}c(n-i)a_{i}-2b(i+1)a_{i+1}=0.$$

Substituting the above equality, the form of $\overline F_{2}$ is as following
\begin{align*}
\overline F_{2}&=\sum\limits^{n}_{i=0}(-\frac{1}{9}(n-i)c^{2}+\frac{1}{6}c^{2}+\frac{a}{2})(n-i)a_{i}v^{2i}w^{n-i-1}u^{2}\\
&+\sum\limits^{n}_{i=0}\frac{1}{2}(n-i)(n-i-1)a_{i}v^{2i+2}w^{n-i-2}u^{2}\\
&-\sum\limits^{n}_{i=0}\frac{1}{3}(a+1)(n-i)(n-i-1)a_{i}v^{2i+1}w^{n-i-2}u^{4}\\
&+\sum\limits^{n}_{i=0}[\frac{1}{18}(a+1)^{2}+\frac{1}{36}c^{2}](n-i)(n-i-1)a_{i}v^{2i}w^{n-i-2}u^{6}\\
&-\sum\limits^{n}_{i=0}\frac{1}{9}c(a+1)(n-i)a_{i}v^{2i}w^{n-i-1}\sqrt{\frac{1}{2}u^{4}-2w}\\
&+\sum\limits^{n}_{i=0}\frac{1}{3}c(n-i)(n-i-1)a_{i}v^{2i+1}w^{n-i-2}u^{2}\sqrt{\frac{1}{2}u^{4}-2w}\\
&-\sum\limits^{n}_{i=0}\frac{1}{9}c(a+1)(n-i)(n-i-1)a_{i}v^{2i}w^{n-i-2}u^{4}\sqrt{\frac{1}{2}u^{4}-2w}\\
&+\sum\limits^{n}_{i=1}\overline{a}_{i}v^{2i-1}w^{n-i}.
\end{align*}

Substituting changes of variables \eqref{e4}, $F_{2}(x,y,z)=\overline F_{2}(u,v,w)$.

Next, solving \eqref{e3} with $j=3$ for fixed $v$ and $w$, we get
\begin{align*}
\overline F_{3}&=\sum\limits^{n}_{i=0}[\frac{1}{9}(a+1)(n-i)a_{i}+\frac{2}{3}\overline{a}_{i+1}]cv^{2i+1}w^{n-i-1}\int\frac{1}{\sqrt{\frac{1}{2}u^{4}-2w}}du\\
&+\sum\limits^{n}_{i=0}(\frac{2}{27}c^{3}-\frac{1}{9}a^{2}c+\frac{1}{9}ac-\frac{1}{9}c-b)(n-i)a_{i}v^{2i}w^{n-i-1}\int\frac{u^{2}}{\sqrt{\frac{1}{2}u^{4}-2w}}du\\
&+\overline R_{3}(u,v,w),
\end{align*}
where $\overline R_{3}$ is weight polynomial of weight degree $4n-3$. Because the coefficients of $\int\frac{1}{\sqrt{\frac{1}{2}u^{4}-2w}}du$ and $\int\frac{u^{2}}{\sqrt{\frac{1}{2}u^{4}-2w}}du$ must vanish, we have $i=0,1,\ldots,n$
\begin{equation*}
\overline a_{i+1}=-\frac{1}{6}(a+1)(n-i)a_{i},\quad b=\frac{2}{27}c^{3}-\frac{1}{9}a^{2}c+\frac{1}{9}ac-\frac{1}{9}c.
\end{equation*}

Substituting the above relationships of parameters in $\bar R_{3}(u,v,w)$, $\overline F_{3}$ becomes
\begin{align*}
\overline F_{3}&=\sum\limits^{n}_{i=0}(\frac{2}{27}(n-i)-\frac{1}{9})c^{2}(a+1)(n-i)a_{i}v^{2i}w^{n-i-1}u\\
&+\sum\limits^{n}_{i=0}(n-i)\overline a_{i}v^{2i}w^{n-i-1}u\\
&+\sum\limits^{n}_{i=0}(-\frac{1}{9}(n-i)c^{2}+\frac{17}{54}c^{2}+\frac{1}{2}a)(n-i)(n-i-1)a_{i}v^{2i+1}w^{n-i-2}u^{3}\\
&+\sum\limits^{n}_{i=0}\frac{1}{6}(n-i)(n-i-1)(n-i-2)a_{i}v^{2i+3}w^{n-i-3}u^{3}\\
&-\sum\limits^{n}_{i=0}\frac{2}{9}bic(n-i)a_{i}v^{2i-1}w^{n-i-1}u^{3}\\
&-\sum\limits^{n}_{i=0}\frac{1}{3}(a+1)(n-i)\overline a_{i}v^{2i-1}w^{n-i-1}u^{3}\\
&+\sum\limits^{n}_{i=0}(\frac{1}{27}(n-i)c^{2}-\frac{1}{9}c^{2}-\frac{1}{6}a)(a+1)(n-i)(n-i-1)a_{i}v^{2i}w^{n-i-2}u^{5}\\
&-\sum\limits^{n}_{i=0}\frac{1}{6}(a+1)(n-i)(n-i-1)(n-i-2)a_{i}v^{2i+2}w^{n-i-3}u^{5}\\
&+\sum\limits^{n}_{i=0}(\frac{1}{18}(a+1)^{2}+\frac{1}{36}c^{2})(n-i)(n-i-1)(n-i-2)a_{i}v^{2i+1}w^{n-i-3}u^{7}\\
&-\sum\limits^{n}_{i=0}(\frac{1}{162}(a+1)^{2}+\frac{1}{108}c^{2})(a+1)(n-i)(n-i-1)(n-i-2)a_{i}v^{2i}w^{n-i-3}u^{9}\\
&-\sum\limits^{n}_{i=0}\frac{5}{27}c(a+1)(n-i)(n-i-1)a_{i}v^{2i+1}w^{n-i-2}u\sqrt{\frac{1}{2}u^{4}-2w}\\
&+\sum\limits^{n}_{i=0}\frac{4}{9}bi(a+1)(n-i)a_{i}v^{2i-1}w^{n-i-1}u\sqrt{\frac{1}{2}u^{4}-2w}\\
&+\sum\limits^{n}_{i=0}\frac{1}{3}c(n-i)\overline a_{i}v^{2i-1}w^{n-i-1}u\sqrt{\frac{1}{2}u^{4}-2w}\\
&+\sum\limits^{n}_{i=0}(-\frac{1}{81}(n-i)c^{2}+\frac{1}{27}(a+1)^{2}+\frac{7}{162}c^{2}+\frac{1}{6}a)\\
&c(n-i)(n-i-1)a_{i}v^{2i}w^{n-i-2}u^{3}\sqrt{\frac{1}{2}u^{4}-2w}\\
&+\sum\limits^{n}_{i=0}\frac{1}{6}c(n-i)(n-i-1)(n-i-2)a_{i}v^{2i+2}w^{n-i-3}u^{3}\sqrt{\frac{1}{2}u^{4}-2w}\\
&-\sum\limits^{n}_{i=0}\frac{1}{9}c(a+1)(n-i)(n-i-1)(n-i-2)a_{i}v^{2i+1}w^{n-i-3}u^{5}\sqrt{\frac{1}{2}u^{4}-2w}\\
&+\sum\limits^{n}_{i=0}\frac{1}{9}(\frac{1}{6}(a+1)^{2}+\frac{1}{36}c^{2})c(n-i)(n-i-1)(n-i-2)a_{i}v^{2i}w^{n-i-3}u^{7}\sqrt{\frac{1}{2}u^{4}-2w}.
\end{align*}

Moreover, solving \eqref{e3} with $j=4$ for fixed $v$ and $w$, we get
\begin{align*}
\overline F_{4}=\sum\limits^{n}_{i=0}(-\frac{1}{27}c^{3}-\frac{1}{54}a^{2}c+\frac{7}{54}ac-\frac{1}{54}c)(a+1)(n-i) a_{i}v^{2i}w^{n-i-1}\int\frac{u}{\sqrt{\frac{1}{2}u^{4}-2w}}du+\overline R_{4}
\end{align*}
where $\overline R_{4}$ is weight polynomial of weight degree $4n-4$. The coefficient of $\int\frac{u}{\sqrt{\frac{1}{2}u^{4}-2w}}du$ must vanish. Then we have
$$-\frac{1}{27}c^{3}-\frac{1}{54}a^{2}c+\frac{7}{54}ac-\frac{1}{54}c=0\ \mbox{or} \ a=-1.$$

According to the above analysis, for Case (ii) $F_{0}=\sum\limits^{n}_{i=0}a_{i}y^{2i}(\frac{1}{4}x^{4}-\frac{1}{2}z)^{n-i}$, there are also two different conditions of parameters of system \eqref{e1}:
\begin{description}
\item{(3)} $a=-1, bd=-\frac{2}{3}c, b=\frac{2}{27}c^{3}-\frac{1}{3}c,c\neq 0$;
\item{(4)}$-\frac{1}{81}c^{2}-\frac{1}{27}a^{2}+\frac{4}{27}a-\frac{1}{27}=0,  bd=-\frac{2}{3}c,  b=\frac{2}{27}c^{3}-\frac{1}{9}a^{2}c+\frac{1}{9}ac-\frac{1}{9}c, c\neq 0$.
\end{description}
Above all, the proof of Lemma \eqref{l3'} has been completed.
\end{proof}

\begin{remark}\label{r3}
According to Lemma \ref{l3'}, the cofactor of system \eqref{e1} must be $\frac{4}{3}nc$, where $n$ is in the form of $F_0$ in \eqref{e10} .
\end{remark}

Assume that $\phi(x,y,z)$ is an irreducible Darboux polynomial of system \eqref{e1}, then $\Phi(x,y,z)=\phi^{n}(x,y,z)$ can be expanded on the basis of characteristic curves \eqref{E3}, which can be denoted as $\Phi(x,y,z)=\Phi_{0}(x,y,z)+\Phi_{1}(x,y,z)+\ldots+\Phi_{l}(x,y,z)$, where $\Phi_{j}(x,y,z)$, $j=0,1,\ldots,l$ are weight polynomials with weight exponents (1,2,2), weight degree of $\Phi_{j}(x,y,z)$ is $l-j$. Due to this fact, we have the following result.

\begin{lemma}\label{l4}
Assume that $\phi(x,y,z)$ is an irreducible Darboux polynomial of system \eqref{e1} with cofactor $\overline{k}$. Let $\Phi(x,y,z)=\phi^{n}(x,y,z)=\Phi_0+\Phi_1+\ldots+\Phi_l$. If $\Phi_{0}=F_{0}$, where $F_{0}$ is in the form of \eqref{e10}, $\Phi(x,y,z)$ and $f(x,y,z)$ satisfy the same conditions of parameters and have the same cofactor, then $f(x,y,z)|_{m=0}=\Phi(x,y,z)$, where $f=F|_{\alpha=1}=F_{0}+F_{1}+\ldots+F_{l}$.
\end{lemma}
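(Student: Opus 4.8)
The plan is to deduce the equality from the rigidity already established in Lemmas \ref{l2} and \ref{l3'} and Remark \ref{r3}, namely that the nonzero cofactor of a Darboux polynomial of \eqref{e1} is rigidly tied to the weight degree of its leading weight-homogeneous part. The whole argument is then a linearity-plus-degree-drop computation on the difference $f|_{m=0}-\Phi$.

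First I would record that $f|_{m=0}$ is genuinely a Darboux polynomial of \eqref{e1}. Indeed, substituting $m=0$ in the Darboux identity of the assistant system \eqref{e24} commutes with the partial derivatives $\partial_x,\partial_y,\partial_z$, and by Lemma \ref{l6} the cofactor is the constant $k_0=\frac{4}{3}nc$, which is free of $m$; hence $f|_{m=0}$ is a Darboux polynomial of \eqref{e1} with cofactor $\frac{4}{3}nc$. On the other hand $\Phi=\phi^n$ is a Darboux polynomial of \eqref{e1} whose cofactor is $n\overline{k}$ (the product rule for Darboux polynomials in the Proposition of Section 1), and the hypothesis that $\Phi$ and $f$ share the same cofactor forces $n\overline{k}=\frac{4}{3}nc$. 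Next I would observe that in both admissible cases of Lemma \ref{l3'} the leading term $F_0$ does not involve $m$: in case (i) only $a_0$ survives and $F_0=a_0(\frac{1}{4}x^4-\frac{1}{2}z^2)^n$, while in case (ii) one necessarily has $m=0$. Consequently the weight-degree-$4n$ part of $f|_{m=0}$ equals $F_0|_{m=0}=F_0=\Phi_0$, the leading part of $\Phi$.

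The heart of the argument is then the difference $D=f|_{m=0}-\Phi$. By linearity of the Darboux operator, $D$ satisfies the Darboux equation of \eqref{e1} with the common cofactor $\frac{4}{3}nc$. Suppose, for contradiction, that $D\not\equiv 0$. Expanding $D$ into weight-homogeneous parts with exponents $(1,2,2)$, its leading part $D_0$ has weight degree strictly less than $4n$, because the weight-$4n$ parts of $f|_{m=0}$ and $\Phi$ coincide and cancel. However $D$ is a Darboux polynomial of \eqref{e1} with nonzero cofactor, so Lemma \ref{l6} gives $k_1=0$ and hence $D_0$ lies in the kernel of $L$, i.e. $D_0$ has one of the two forms \eqref{e9} or \eqref{e10}. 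Lemma \ref{l2} rules out \eqref{e9}, so $D_0$ is of the form \eqref{e10} with some weight degree $4n'$, and Remark \ref{r3} then forces the cofactor of $D$ to be $\frac{4}{3}n'c$. Comparing $\frac{4}{3}n'c=\frac{4}{3}nc$ and using $c\neq 0$ yields $n'=n$, so $D_0$ has weight degree $4n$, contradicting $4n'<4n$. Therefore $D\equiv 0$, that is $f|_{m=0}=\Phi$.

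I expect the only real obstacle to be the bookkeeping that links the cofactor to the leading weight degree; this is precisely the information packaged in Lemmas \ref{l2}, \ref{l3'} and Remark \ref{r3}, so invoking them correctly — in particular recognizing that form \eqref{e9} is impossible for a nonzero cofactor and that the map $n\mapsto\frac{4}{3}nc$ is injective for $c\neq 0$ — is the crux. The remaining points, that passing to $m=0$ preserves both the Darboux property and the leading part, are routine once one notes that $F_0$ is $m$-free in both cases of Lemma \ref{l3'}.
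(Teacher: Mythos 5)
Your argument is correct and is essentially the paper's own proof: both form the difference $\Phi-f|_{m=0}$, observe that its leading weight-homogeneous part has weight degree $4p<4n$ (using Remark \ref{r2} / Lemma \ref{l2} to exclude form \eqref{e9}), and then invoke Remark \ref{r3} together with uniqueness of the cofactor and $c\neq 0$ to force a contradiction. The extra details you supply (that $F_0$ is $m$-free in both admissible cases, and that specializing $m=0$ preserves the Darboux identity) are left implicit in the paper but are consistent with it.
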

\begin{proof}
According to Remark \ref{r3}, the cofactor of $f|_{m=0}$ is $\frac{4}{3}nc$. $\Phi(x,y,z)$ is a Darboux polynomial with cofactor $n\overline{k}$, then $\overline{k}=\frac{4}{3}c$.

Assume that $f|_{m=0}(x,y,z)\neq\Phi(x,y,z)$, then $\Phi-f|_{m=0}$ is a Darboux polynomial of system \eqref{e1} with cofactor $\frac{4}{3}nc$. We also expand $\Phi-f|_{m=0}$ on the basis of \eqref{E3}, then we have $\Phi-f|_{m=0}=\eta_{0}+\eta_{1}+\ldots+\eta_{t}$, where $\eta_{j}(x,y,z),j=0,1,\ldots,t$ are weight polynomials with weight exponents (1,2,2), the weight degree of $\eta_{j}(x,y,z)$ is $t-j$.

If $F_{0}$ is in the form of \eqref{e10}, it follows from Remark \ref{r2} and $\Phi_{0}=F_{0}$ that $l=4n$ and $t=4p,p\leq n-1$. Thus, according to Remark \ref{r3}, the cofactor of $\Phi-f|_{m=0}$ is $\frac{4}{3}pc$. Because the cofactor for one Darboux polynomial is unique, Then we have $f(x,y,z)=\Phi(x,y,z)$.
\end{proof}

\begin{lemma}\label{l3}
If $F_{0}$ is in the form \eqref{e10}, system \eqref{e1} have irreducible Darboux polynomials with nonzero cofactor $\frac{4}{3}c$ and $c\not=0$ shown in Table 1.
\end{lemma}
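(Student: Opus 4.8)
The plan is to produce the four generators of Table~1 explicitly by running the characteristic-curve recursion \eqref{e3} that was set up in the proof of Lemma \ref{l3'}, specialized to $n=1$. The value $n=1$ is forced: by Remark \ref{r3} a Darboux polynomial whose leading weight-homogeneous part $F_0$ is of the form \eqref{e10} has cofactor $\frac43nc$, so demanding cofactor $\frac43c$ fixes $n=1$. Since the target is a Darboux polynomial of the F-N system \eqref{e1}, I set $m=0$ throughout, and I invoke Lemma \ref{l4}, which tells us the polynomial $f|_{m=0}=F_0+F_1+\cdots+F_l$ reconstructed from the $F_j$ is exactly the irreducible generator $\phi$ (for $n=1$ one has $\phi^n=\phi$). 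Thus it suffices to assemble the finitely many pieces $F_0,\ldots,F_4$ under each of the parameter conditions (1)--(4) of Lemma \ref{l3'} and to recognize the outcome in Table~1.

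First I would treat Case (i), $bd=-c$, with $F_0=a_0(\frac14x^4-\frac12z^2)$. Setting $n=1$ and $m=0$ in the expressions for $F_1,F_2,F_3$ obtained in the proof of Lemma \ref{l3'} kills every term carrying a factor $(n-1)$ or $(n-2)$, leaving a short polynomial whose weight-degree-$3$, $2$ and $1$ parts are, up to the common factor $a_0$,
$$xy+\tfrac13cxz-\tfrac13(a+1)x^3,\qquad (\tfrac1{18}c^2+\tfrac12a)x^2-\tfrac19c(a+1)z-\tfrac13(a+1)y,\qquad -\tfrac1{27}c^2(a+1)x.$$
Imposing that the non-polynomial integral in the $j=4$ equation of \eqref{e3} has vanishing coefficient reproduces the constraint of Lemma \ref{l3'}(2) (and is automatic if $a=-1$) and forces $F_4=0$. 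Summing $F_0+\cdots+F_3$ and rescaling by $2/a_0$ gives the third generator of Table~1 for $a\neq-1$, and --- after the $(a+1)$-terms disappear --- the first generator for $a=-1$.

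Next I would repeat this for Case (ii), $bd=-\frac23c$, where \eqref{e20} additionally forces $m=0$ and $F_0=a_0(\frac14x^4-\frac12z^2)+a_1y^2$. The relation $\frac13c(n-i)a_i-2b(i+1)a_{i+1}=0$ at $n=1$, combined with $bd=-\frac23c$, yields $a_1=-\frac{d}{4}a_0$, so the extra term contributes precisely $-\frac12dy^2$ after rescaling; carrying the recursion through $F_1,F_2,F_3$ and again demanding the $j=4$ integral coefficient vanish produces the constraint of Lemma \ref{l3'}(4) (or $a=-1$) and recovers the second and fourth generators of Table~1. In every case the cofactor equals $\frac43c$ by Remark \ref{r3} with $n=1$, and $c\neq0$ is part of the conditions (1)--(4).

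Finally I would verify irreducibility and that each assembled polynomial indeed satisfies Definition \ref{d1}. Generators 1 and 3 are linear in $y$, say $2x\,y+B(x,z)$; since $x\nmid B$ (the monomial $-z^2$ is not divisible by $x$), the coefficient $2x$ and $B$ are coprime and the polynomial is irreducible, while generators 2 and 4 are genuine quadratics in $y$ (note $d\neq0$, else $c=0$) for which an analogous discriminant/coprimality check applies. I expect the genuine difficulty to be the bookkeeping rather than any conceptual step: at every order one must check that the coefficients of the non-polynomial integrals $\int\frac{du}{\sqrt{\frac12u^4-2w}}$, $\int\frac{u\,du}{\sqrt{\frac12u^4-2w}}$ and $\int\frac{u^2\,du}{\sqrt{\frac12u^4-2w}}$ cancel, so that each $\overline F_j$ is an honest weight-homogeneous polynomial and the series truncates with $F_4=0$; these very cancellations are what encode the parameter relations of Table~1, and confirming that they close up consistently at $n=1$ is the crux. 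Uniqueness of the result is then guaranteed by Lemma \ref{l4}.
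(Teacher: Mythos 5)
Your proposal is correct and follows essentially the same route as the paper: both rest on Lemma \ref{l3'} for the parameter constraints and on Lemma \ref{l4} for uniqueness and irreducibility of the resulting degree-four polynomials. The only difference is presentational --- you derive $\phi_1,\dots,\phi_4$ by running the recursion \eqref{e3} forward at $n=1$, whereas the paper simply exhibits these polynomials and verifies the Darboux condition directly before invoking Lemma \ref{l4}.
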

\begin{proof}
According to Lemma \ref{l3'}, we have that the parameters in system \eqref{e1} should satisfy conditions $(1),(2),(3),(4)$ which are listed in the lemma.

If parameters of system \eqref{e1} satisfy condition $(1)$, $\phi_{1}$ is a Darboux polynomial of system \eqref{e1} with the cofactor $\frac{4}{3}c$ by a simple calculation,
where $$\phi_{1}(x,y,z)=\frac{1}{2}x^{4}-z^{2}+2xy+\frac{2}{3}cxz+(\frac{1}{9}c^{2}-1)x^{2}.$$

Let $\Phi(x,y,z)=\phi_{1}^{n}(x,y,z)$, and expand $\Phi(x,y,z)=\Phi_{0}+\Phi_{1}+\cdots+\Phi_{4n}$, where $\Phi_{j}(x.y,z)$ is a weight homogeneous polynomial of weight degree $4n-j$, weight exponents of $\Phi_{j}$ is $(1,2,2)$. It is easy to check $\Phi_{0}=F_{0}$. Then, it follows from Lemma \ref{l4} that $f|_{m=0}=\Phi(x,y,z).$ It is to say, $F_{0}+F_{1}+\cdots+F_{4n}=\phi_{1}^{n}$. Thus,$\phi_{1}(x,y,z)$ is an irreducible Darboux polynomial of system \eqref{e1} with cofactor $\frac{4}{3}c$ under condition $(1)$ of parameters.

If the parameters of system \eqref{e1} satisfy condition $(2),(3),(4)$, $\phi_{2},\phi_{3}$ and $\phi_4$ are Darboux polynomials of system \eqref{e1} with the cofactor $\frac{4}{3}c$ by a simple calculation,
where $$\phi_{2}=\frac{1}{2}x^{4}-z^{2}-\frac{2}{3}(a+1)x^{3}+2xy+\frac{2}{3}cxz-\frac{2}{9}c(a+1)z+(\frac{1}{9}c^{2}+a)x^{2}-\frac{2}{3}(a+1)y-\frac{2}{27}c^{2}(a+1)x$$
$$\phi_{3}=\frac{1}{2}x^{4}-z^{2}-\frac{1}{2}dy^{2}+2xy+\frac{2}{3}cxz+(\frac{1}{9}c^{2}-1)x^{2},$$
\begin{align*}
\phi_{4}&=\frac{1}{2}x^{4}-z^{2}-\frac{1}{2}dy^{2}-\frac{2}{3}(a+1)x^{3}+2xy+\frac{2}{3}cxz\\
&-\frac{2}{9}c(a+1)z+(\frac{1}{9}c^{2}+a)x^{2}-\frac{1}{3}(a+1)y-\frac{2}{27}c^{2}(a+1)x.
\end{align*}

It follows from Lemma \ref{l3'} and \ref{l4} that $\phi_{2},\phi_{3}$ and $\phi_4$ are irreducible Darboux polynomials of system\eqref{e1} with cofactor $\frac{4}{3}c$ under condition $(2),(3),(4)$ of parameters. The proof of these conditions are the same as the proof of condition $(1)$.
\end{proof}

\subsection{$k_{0}=0$}

In this part,  we classify Darboux polynomials with cofactor zero. In fact, Darboux polynomial with cofactor zero is the polynomial integral of the system. The method we used in this part is the same as the one which is used in the former section. As we know, no matter the cofactor $k_{0}$ is zero or not, $F_{0}$ should be in the form of \eqref{e9} and \eqref{e10}. When $k_{0}=0$ and Darboux polynomials are not only depend on variable $y$, we have the following result.

\begin{lemma}\label{l5}
If $k_{0}=0$, and Darboux polynomials of system \eqref{e1} are not only depend on variable $y$,  then the parameters $b$ and $c$ in system \eqref{e1} are equal to zero.
\end{lemma}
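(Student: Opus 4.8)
The plan is to repeat, with $k_{0}=0$, the order-by-order scheme already used in Lemmas \ref{l2} and \ref{l3'}. Since by Lemma \ref{l6} we have $k_{1}=0$, here the cofactor vanishes identically and the leading weight term is $F_{0}=G_{0}$, of the form \eqref{e9} or \eqref{e10}; we may take $m=0$, because the Darboux polynomials of \eqref{e1} are those of the assistant system with $m=0$. The hypothesis that the Darboux polynomial does \emph{not} depend on $y$ alone means precisely that $w=\frac{1}{4}x^{4}-\frac{1}{2}z^{2}$ genuinely appears in $F_{0}$, i.e. there is an index $i_{0}<n$ with $a_{i_{0}}\neq 0$. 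For each of the two forms I would substitute $F_{0}$ into \eqref{e3}, pass to the variables \eqref{e4}--\eqref{e5}, integrate, and force the coefficients of the non-polynomial primitives $\int\frac{1}{\sqrt{\frac{1}{2}u^{4}-2w}}\,du$, $\int\frac{u^{2}}{\sqrt{\frac{1}{2}u^{4}-2w}}\,du$ and $\int\frac{u}{\sqrt{\frac{1}{2}u^{4}-2w}}\,du$ to vanish, each $F_{j}$ being a weight homogeneous polynomial.

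At order one the vanishing conditions are the $k_{0}=0$ specialisations of \eqref{e20}/\eqref{e12} (resp. \eqref{e11} for form \eqref{e9}): for form \eqref{e10} one obtains $\frac{4}{3}(n-i)c=2ibd$ for every $i$ with $a_{i}\neq 0$. At order two the coefficient of $\int\frac{u}{\sqrt{\frac{1}{2}u^{4}-2w}}\,du$ is independent of $k_{0}$, so it reproduces the relation \eqref{e19}, namely $(bd+c)(n-i)a_{i}-2b(i+1)a_{i+1}=0$. I would then split into cases on the support of the $a_{i}$. If two or more are nonzero, subtracting two copies of the order-one relation gives a nonsingular linear system in $c$ and $bd$ whose only solution is $c=0$, $bd=0$. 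If exactly one $a_{i_{0}}$ is nonzero with $i_{0}\geq 1$, then \eqref{e19} at $i=i_{0}-1$ reads $-2bi_{0}a_{i_{0}}=0$, forcing $b=0$ and then $c=0$ from the order-one relation (using $n-i_{0}\geq 1$). If only $a_{0}\neq 0$, the order-one relation $\frac{4}{3}nc\,a_{0}=0$ gives $c=0$ directly. In every case $c=0$.

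To upgrade $c=0$ to $b=0$ I would go to order three and impose the vanishing of the coefficient of $\int\frac{u^{2}}{\sqrt{\frac{1}{2}u^{4}-2w}}\,du$. As in Lemma \ref{l3'} this coefficient factors as $\bigl(\frac{2}{27}c^{3}-\frac{1}{9}a^{2}c+\frac{1}{9}ac-\frac{1}{9}c-b\bigr)(n-i)a_{i}$ on each monomial; since by hypothesis some $a_{i}\neq 0$ with $n-i\geq 1$, this yields $b=\frac{2}{27}c^{3}-\frac{1}{9}a^{2}c+\frac{1}{9}ac-\frac{1}{9}c$, which collapses to $b=0$ once $c=0$. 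The computation for form \eqref{e9} is entirely parallel, using the odd-power order-one condition and the analogue of \eqref{E0} in place of \eqref{e19}, and gives the same conclusion $b=c=0$. The main obstacle is not conceptual but computational: the heavy bookkeeping of integrating \eqref{e3} order by order and correctly separating the three transcendental primitives from the polynomial remainder---exactly the kind of calculation carried out in Lemmas \ref{l2} and \ref{l3'}---together with the case split on the support of the $a_{i}$; once $c=0$ is secured, the single order-three constraint closes the argument.
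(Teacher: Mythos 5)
Your proposal is correct and is essentially the paper's own argument: both proofs note that the order-one and order-two vanishing conditions \eqref{e11}/\eqref{e12} and \eqref{E0}/\eqref{e19} persist when $k_{0}=0$, and both fall back on the order-three relation \eqref{e15} only in the one surviving case $F_{0}=a_{0}(\frac{1}{4}x^{4}-\frac{1}{2}z^{2})^{n}$ to upgrade $c=0$ to $b=0$. The only difference is bookkeeping --- you split on the support of the $a_{i}$ while the paper splits on $bd=-c$ versus $bd\neq-c$, and in your first case (at least two nonzero $a_{i}$) the conclusion already follows at order two, since $c=bd=0$ reduces \eqref{e19} to $ba_{i+1}=0$.
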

\begin{proof}
If $F_{0}$ is in the form \eqref{e9}, the equality \eqref{E0} in the proof of Lemma \ref{l2} holds whether $k_{0}=0$ or not. It follows from \eqref{E0} that $ba_{1}=0$.

If $b\neq0$, then $a_{1}=0$, it is from \eqref{E0} that $a_{i}=0,i=1,\ldots,n$, then $F_{0}=0$, which contradicts with Remark \ref{r1}. So $b=0$.

If $b=0$, \eqref{E0} becomes $$\sum\limits^{n-1}_{i=1}c(n-i)a_{i}v^{2i}w^{n-i-1}=0.$$
 If there exists $a_i\neq0,i=1,2,\ldots,n-1$, then $c=0$; If $a_n\neq0,a_i=0,i=1,2,\ldots,n-1$, then $F_0=a_ny^{2n-1}$, and the polynomial integral is only depend on variable $y$. Thus, if $b=0$, then $c=0$.

 If $F_{0}$ is in the form \eqref{e10}, the equalities \eqref{e12} and \eqref{e19} in the proof of Lemma \ref{l3'} hold whether $k_{0}=0$ or not. We discuss this situation also in the following cases:
\begin{description}
\item{(i)} $bd=-c$

The equality $\frac{2}{3}(2n+i)c=0$ holds from \eqref{e12}, then $c=0$. Substituting $bd=-c$ into \eqref{e19}, we have $2bia_{i}=0,i=1,2,\ldots,n$. If there exits $a_{i}\neq0,i=1,2,\ldots,n$, then $b=0$. If $a_{0}\neq0,a_{i}=0,i=1,2,\ldots,n$, then $F_{0}=a_{0}(\frac{1}{4}x^{4}-\frac{1}{2}z^{2})^{n}$, and it follows from \eqref{e15} that $b=0$.

\item{(ii)} $bd\neq-c$

If $b=0$, then $c\neq0$, it follows from \eqref{e19} that $c(n-i)a_{i}=0,i=0,1,\ldots,n$, then $a_{i}=0,i=0,1,\ldots,n-1$. It follows from Remark \ref{r1} that $a_{n}\neq0,a_{i}=0,i=0,1,\ldots,n-1$. Let $m=0$, then $F_{0}=a_{n}y^{2n}$ and $f|_{m=0}=y^{2n}$. It contradicts with $f$ is not only depend on $y$.

If $b\neq0,c=0$, then $bd\neq0$. From \eqref{e12} we have $-2ibd=0$, then $a_{0}\neq0,a_{i}=0,i=1,2,\ldots,n$. and $F_{0}=a_{0}(\frac{1}{4}x^{4}-\frac{1}{2}z^{2})^{n}$.
Substituting $F_{0}$ and $c=0,k_{0}=0$ in \eqref{e3}, when $j=2$ we have $bd=0$. which contradicts with $bd \neq 0$.

If $b\neq0,c\neq0$, it follows from \eqref{e19} that $a_{i}\neq0,i=0,1,\ldots,n$. Because the cofactor of a Darboux polynomial  is unique, we have $k_{0}=\frac{4}{3}nc=0,bd=-\frac{2}{3}c$, which contradicts with $c\neq0$.

So, the relation $bd \neq -c$ doesn't hold.
\end{description}

The lemma has been proved.
\end{proof}

In the following, we just need to consider the situation when $b=c=0$. And it is obvious that $y$ is an irreducible Darboux polynomial of system \eqref{e1} when $k_{0}=0$.

If $k_{0}=0,b=c=0$, \eqref{e3} becomes
\begin{equation}\label{e25}
\begin{array}{ll}
L[F_{1}]&=[(a+1)x^{2}-y]\frac{\partial{F_{0}}}{\partial{z}},\\
L[F_{j}]&=[(a+1)x^{2}-y]\frac{\partial{F_{j-1}}}{\partial{z}}-ax\frac{\partial{F_{j-2}}}{\partial{z}},\qquad\hfill j=2, 3, \ldots, l+3,
\end{array}
\end{equation}
where $F_{j}=0,j>l$.

 If $b=c=0$, we consider $F_{0}^{\ast}=a_{0}(\frac{1}{4}x^{4}-\frac{1}{2}z^{2})^{n}$. Let $m=0$, substituting $F_{0}$ into equations of \eqref{e25} and solving the the differential equations, we can obtain the forms of $F_{j}^{\ast},j=1,2,\ldots,4n$. The calculation procedure is the same as the one which is used in the proof when $k_0 \neq 0$.

If $m=0,b=c=0$,
$$\phi_{5}=\frac{1}{4}x^{4}-\frac{1}{2}z^{2}-\frac{1}{3}(a+1)x^{3}+xy+\frac{1}{2}ax^{2}$$ is a polynomial integral of system \eqref{e1}.

 It follows from Lemma \ref{l3'} and \ref{l4} that $\phi_{5}^{n}(x,y,z)=F_{0}^{*}+F_{1}^{*}+\ldots+F_{4n}^{*}$. The proof is the same as that in the proof of Lemma \ref{r3}.

 Thus, $\frac{1}{4}x^{4}-\frac{1}{2}z^{2}-\frac{1}{3}(a+1)x^{3}+xy+\frac{1}{2}ax^{2}$ and $y$ are polynomial integrals of system \eqref{e1}.

When $F_{0}$ is in the general form of \eqref{e9} and \eqref{e10}, by calculating equations in \eqref{e25}, we obtain a polynomial integral of system \eqref{e1} denoted by $F(x,y,z)$. The relationship between $F(x,y,z)$ and $y,\phi_5$  is shown in the following Lemma.
\begin{lemma}\label{l7}
If $m=0$, $F_{0}=y^{p}(\frac{1}{4}x^{4}-\frac{1}{2}z^{2})^{q}, p,q\in\mathbb{N}$, then the polynomial integral calculated by \eqref{e25} with $F_{0}$ is $y^{p}\phi_{5}^{q}$.
\end{lemma}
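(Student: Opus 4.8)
The plan is to reduce the general leading term $F_{0}=y^{p}(\frac{1}{4}x^{4}-\frac{1}{2}z^{2})^{q}$ to the already-settled case $p=0$ by factoring $y^{p}$ through the entire recursion \eqref{e25}. The crucial observation is that when $m=0$ the operator \eqref{op} becomes $L=z\frac{\partial}{\partial x}+x^{3}\frac{\partial}{\partial z}$, which contains no $\frac{\partial}{\partial y}$; hence $L$ is a derivation with $L[y]=0$, so that $L[y^{p}H]=y^{p}L[H]$ for every polynomial $H$. Moreover each operation on the right-hand side of \eqref{e25}, namely $\frac{\partial}{\partial z}$ and multiplication by $(a+1)x^{2}-y$ or by $ax$, commutes with multiplication by $y^{p}$ because $\frac{\partial y}{\partial z}=0$. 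Consequently, if a sequence $\{H_{j}\}$ of weight homogeneous polynomials solves \eqref{e25} with leading term $H_{0}=(\frac{1}{4}x^{4}-\frac{1}{2}z^{2})^{q}$, then $\{y^{p}H_{j}\}$ solves \eqref{e25} with leading term $F_{0}=y^{p}H_{0}$, and the weight degrees match, since $y^{p}H_{j}$ has weight degree $(2p+4q)-j$. In particular the factored recursion for $\{H_{j}\}$ is \emph{identical} to \eqref{e25}, the term $-y\,\frac{\partial}{\partial z}F_{j-1}$ simply becoming $y^{p}\bigl(-y\,\frac{\partial}{\partial z}H_{j-1}\bigr)$.

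Next I would invoke the case $p=0$, which has already been established before this lemma: solving \eqref{e25} from the leading term $(\frac{1}{4}x^{4}-\frac{1}{2}z^{2})^{q}$ produces exactly $\phi_{5}^{q}$, i.e. $\sum_{j}H_{j}=\phi_{5}^{q}$. Multiplying through by $y^{p}$ then gives
$$\sum_{j}y^{p}H_{j}=y^{p}\phi_{5}^{q},$$
so the polynomial integral calculated from $F_{0}=y^{p}(\frac{1}{4}x^{4}-\frac{1}{2}z^{2})^{q}$ is $y^{p}\phi_{5}^{q}$. As a consistency check, since $y$ and $\phi_{5}$ are both polynomial integrals (Darboux polynomials with zero cofactor) when $m=b=c=0$, the product rule for cofactors shows that $y^{p}\phi_{5}^{q}$ is again a polynomial integral with cofactor $p\cdot 0+q\cdot 0=0$; and its highest weight part, with weight exponents $(1,2,2)$, is $y^{p}(\frac{1}{4}x^{4}-\frac{1}{2}z^{2})^{q}=F_{0}$, because the weight-degree-$4$ part of $\phi_{5}$ is $\frac{1}{4}x^{4}-\frac{1}{2}z^{2}$.

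The step I expect to require the most care is reconciling the factorization with the freedom present in \eqref{e25}. Concretely, when solving each equation $L[F_{j}]=\cdots$ one may add an arbitrary weight homogeneous solution $\overline{G}_{j}(v,w)$ of the homogeneous equation, so "the polynomial integral calculated by \eqref{e25}" must be read as the canonical choice inherited from the $p=0$ computation. I would verify that the scaled freedom $y^{p}\overline{G}_{j}(v,w)$ is exactly the homogeneous freedom available at weight degree $(2p+4q)-j$, so that the two computations correspond level by level and no spurious component is introduced. This amounts to checking the $p=0$ base case is the genuine minimal solution (which is how $\phi_{5}^{q}=H_{0}+\cdots+H_{4q}$ was obtained) and that the derivation identities $L[y]=0$ and $\partial_{z}y=0$ propagate it; crucially, no new integration is needed beyond that already performed in Lemmas \ref{l2} and \ref{l3'}, since every integral $\int(\cdot)\,du$ appearing there is simply multiplied by the inert factor $y^{p}$.
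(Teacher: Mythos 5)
Your proposal is correct and follows essentially the same route as the paper: both arguments rest on the observation that for $m=0$ the operator $L$ and the right-hand sides of \eqref{e25} involve no $y$-derivatives, so multiplication by $y^{p}$ commutes with the entire recursion and reduces the claim to the already-computed case $F_{0}=(\frac{1}{4}x^{4}-\frac{1}{2}z^{2})^{q}$ yielding $\phi_{5}^{q}$. Your additional remarks on matching weight degrees and on the homogeneous freedom $\overline{G}_{j}(v,w)$ make explicit points the paper leaves implicit, but the underlying argument is the same.
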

\begin{proof}
Obviously, if $\tilde F_{0}=(\frac{1}{4}x^{4}-\frac{1}{2}z^{2})^{q}$, then the Darboux polynomial calculated by $\tilde F_{0}$ is $\phi_{5}^{q}.$

Assume that when $\tilde F_{0}=(\frac{1}{4}x^{4}-\frac{1}{2}z^{2})^{q}$, solving equations in \eqref{e25} we have $\tilde F_{j},j=1,2,\ldots,4q.$

Notice that if $m=0$, differential equations in \eqref{e25} are linear and don't have relationship with the partial derivatives of variable $y$. Thus, when $F_{0}=y^{p}(\frac{1}{4}x^{4}-\frac{1}{2}z^{2})^{q}$, we have $F_{j}=y^{p}\tilde F_{j},j=1,2,\ldots,4q.$

Thus, the polynomial integral calculated by $F_{0}=y^{p}(\frac{1}{4}x^{4}-\frac{1}{2}z^{2})^{q}$ is $y^{p}\phi_{5}^{q}$.
\end{proof}

According to Lemma \ref{l7} and the linearity of equations in \eqref{e25}, we have the conclusion that if $m=0$, and $F_{0}$ is in the form of \eqref{e9} or \eqref{e10}, the polynomial integral can be generated by $y$ and $\phi_{5}$.

\textbf{[The proof of Theorem 1]}
According to Lemma \ref{l3} and the conclusion we obtained when $k_0=0$, we have a complete classification of invariant algebraic surfaces of F-N system \eqref{e1}.

\subsection{Proof of the corollary}
\begin{proof}
\begin{description}
\item{(a)}\ It is obvious from Theorem \ref{t1}.
\item{(b)}\ It follows from theorem \ref{t1} that system \eqref{e1} has two functional independent polynomial integrals, then F-N system \eqref{e1} is polynomial integrable.
\end{description}
\end{proof}

\begin{appendices}

The following formulas are used in the proof of Theorem \ref{t1}:\\
Denoting E=EllipticE$(\frac{1}{2}u\sqrt{-\frac{2}{\sqrt{w}}},  I)$,  F=EllipticF$(\frac{1}{2}u\sqrt{-\frac{2}{\sqrt{w}}}, I)$.
$$\int\frac{1}{\sqrt{\frac{1}{2}u^{4}-2w}}du=\frac{\sqrt{\frac{4+2u^{2}}{\sqrt{w}}}\sqrt{\frac{\sqrt{4-2u^{2}}}{\sqrt{w}}}F}{\sqrt{-\frac{2}{\sqrt{w}}}\sqrt{2u^{4}-8w}};$$
$$\int\frac{u^{2}}{\sqrt{\frac{1}{2}u^{4}-2w}}du=2\sqrt{w}\frac{\sqrt{\frac{4+2u^{2}}{\sqrt{w}}}\sqrt{\frac{\sqrt{4-2u^{2}}}{\sqrt{w}}}(F-E)}{\sqrt{-\frac{2}{\sqrt{w}}}\sqrt{2u^{4}-8w}};$$
$$\int\frac{u}{\sqrt{\frac{1}{2}u^{4}-2w}}du=\frac{\sqrt{2}}{2}\ln{(\sqrt{2}u^{2}+\sqrt{2u^{4}-8w})}.$$
Next, we denote $A=\int\frac{1}{\sqrt{\frac{1}{2}u^{4}-2w}}du,B=\int\frac{u^2}{\sqrt{\frac{1}{2}u^{4}-2w}}du,C=\int\frac{u}{\sqrt{\frac{1}{2}u^{4}-2w}}du$, then other integrals in the calculation can be expressed by $A,B$ and $C$. The formulas are presented as following:
$$\int\sqrt{\frac{1}{2}u^{4}-2w}du=\frac{1}{3}u\sqrt{\frac{1}{2}u^{4}-2w}-\frac{4}{3}wA;$$
$$\int u\sqrt{\frac{1}{2}u^{4}-2w}du=\frac{1}{4}u^{2}\sqrt{\frac{1}{2}u^{4}-2w}-wC;$$
$$\int\frac{u^{4}}{\sqrt{\frac{1}{2}u^{4}-2w}}du=\frac{2}{3}u\sqrt{\frac{1}{2}u^{4}-2w}+\frac{4}{3}wA;$$
$$\int\frac{u^{5}}{\sqrt{\frac{1}{2}u^{4}-2w}}du=\frac{1}{2}u^{2}\sqrt{\frac{1}{2}u^{4}-2w}+2wC;$$
$$\int\frac{u^{6}}{\sqrt{\frac{1}{2}u^{4}-2w}}du=\frac{2}{5}u^{3}\sqrt{\frac{1}{2}u^{4}-2w}+\frac{12}{5}wB;$$
$$\int\frac{u^{9}}{\sqrt{\frac{1}{2}u^{4}-2w}}du=(\frac{1}{4}u^{6}+\frac{3}{2}w^{2}u)\sqrt{\frac{1}{2}u^{4}-2w}+6w^{2}C;$$
$$\int u^{2}\sqrt{\frac{1}{2}u^{4}-2w}du=\frac{1}{5}u^{3}\sqrt{\frac{1}{2}u^{4}-2w}-\frac{4}{5}wB;$$
$$\int u^{4}\sqrt{\frac{1}{2}u^{4}-2w}du=(\frac{1}{7}u^{5}-\frac{8}{21}wu)\sqrt{\frac{1}{2}u^{4}-2w}-\frac{16}{21}w^{2}A;$$
$$\int u^{5}\sqrt{\frac{1}{2}u^{4}-2w}du=(\frac{1}{8}u^{6}-\frac{1}{4}wu^{2})\sqrt{\frac{1}{2}u^{4}-2w}-w^{2}C;$$
$$\int u^{6}\sqrt{\frac{1}{2}u^{4}-2w}du=(\frac{1}{9}u^{7}-\frac{8}{45}wu^{3})\sqrt{\frac{1}{2}u^{4}-2w}-\frac{16}{15}w^{2}B;$$
$$\int u^{9}\sqrt{\frac{1}{2}u^{4}-2w}du=(\frac{1}{12}u^{10}-\frac{1}{12}wu^{6}-\frac{1}{2}w^{2}u^{2})\sqrt{\frac{1}{2}u^{4}-2w}-2w^{3}A;$$
$$\int(\sqrt{2}u^{2}+2\sqrt{\frac{1}{2}u^{4}-2w})^{-1}du=\frac{\sqrt{2}}{24}u^{3}w^{-1}-\frac{1}{12}uw^{-1}\sqrt{\frac{1}{2}u^{4}-2w}-\frac{8}{3}A;$$
$$\int(\sqrt{2}u^{2}+2\sqrt{\frac{1}{2}u^{4}-2w})^{-1}\frac{1}{\sqrt{\frac{1}{2}u^{4}-2w}}du=-\frac{1}{4}uw^{-1}-\frac{\sqrt{2}}{8}w^{-1}B;$$
$$\int(\sqrt{2}u^{2}+2\sqrt{\frac{1}{2}u^{4}-2w})^{-1}\frac{u^{2}}{\sqrt{\frac{1}{2}u^{4}-2w}}du=-\frac{1}{12}u^{3}w^{-1}+\frac{\sqrt{2}}{12}uw^{-1}\sqrt{\frac{1}{2}u^{4}-2w}+\frac{\sqrt{2}}{6}A.$$
\end{appendices}

\section*{Acknowledgments}

The second author is partially supported by NNSF of China grant number 11431008 and NSF of Shanghai grant number  15ZR1423700.


\begin{thebibliography}{s2}

{\small
\bibitem{1}
   C.K.R.T. Jones,  Stability of the traveling wave solution of the FitzHugh-Nagumo system,
               \emph{Trans. Amer. Math. Soc},  {\bf 286} (1984),  431每469.
\bibitem{2}
   C. Valls,  Rikitake system: analytic and Darboux integrals,
               \emph{Prco. Roy. Soc. Edinburgh Sect. A},  {\bf 135} (2005),  1309每1326.
\bibitem{3}
   C. Valls,  Invariant algebraic surfaces for generalized Raychaudhuri Equation,
               \emph{Commun. Math. Phys.},  {\bf 308} (2011),  133每146.
\bibitem{4}
   D. Bleecker,  G. Csordas,  Basic partial differential equations,
               Van Nostrand Reinhold,  New York,  1992.
\bibitem{5}
   F. Dumortier,  J. Llibre,  J. C. Art$\acute{e}$s,  Qualitative thoery of planar differential systems,
               UniversiText,  Springer-Verlag,  New York,  2006.
\bibitem{6}
   G. Arioli,  H. Kochb,
               Existence and stability of traveling pulse solutions of the FitzHugh-Nagumo equation.
               \emph{Nonlinear Analysis},  {\bf 113} (2015),  51-70.
\bibitem{7}
   H. J. Hupkes,  B. Sandstede,
               Stability of pulse solutions for FitzHugh-Nagumo system.
               \emph{Transactions of the American mathmatical society},  {\bf 365} (2013),  251-301.
\bibitem{8}
   G. Flores,  Stability analysis for the slow traveling pulse of the FitzHugh-Nagumo system,
               \emph{SIAM J. Math. Anal},  {\bf 22} (1991),  392每399.

\bibitem{10}
   J.S. Nagumo,  S. Arimoto,  S. Yoshizawa,
               An active pulse transmission line simulating nerve axon.
               \emph{Proc. IRE},  {\bf 12} (1963),  2061-2070.
\bibitem{11}
   J. Llibre,  C. Valls,
               Analytic first integrals of the FitzHugh-Nagumo systems.
               \emph{Z. Angew. Math. Phys},  {\bf 12} (2009),  237-245.
 \bibitem{12}
   J. Llibre,  C. Valls,
               Liouvillian integrability of the FitzHugh-Nagumo systems.
               \emph{Journal of Geometry and Physics},  {\bf 12} (2010),  1974-1983.
\bibitem{13}
   J. Llibre,  Xiang Zhang,
               Invariant algebraic surfaces of the Lorenz system.
               \emph{Journal of Mathematical Physics},  {\bf 12} (2002),  1622-1645.
\bibitem{14}
   J. Llibre,  C. Valls,
   Darboux integrability and algebraic invariant surfaces for the Rikitake system,
               \emph{J. Math. Phys.},  {\bf 49} (2008),  032702-1每032702-17.
\bibitem{15}
   J. Llibre,  C. Valls,
   On the integrability of a Muthuswamy-chua system,
               \emph{J. Nonlinear Math. Phys.},  {\bf 308} (2012),  477每488.
\bibitem{16}
   K. Wu,  X. Zhang,
               Darboux polynomials and rational first integrals of the generalized Lorenz systems.
               \emph{Bull. Sci. math.},  {\bf 136} (2012),  291-308.
\bibitem{17} R. FitzHugh,
               Impulses and physiological state in theoretical models of nerve membrane, .
               \emph{Biophys. J.},  {\bf 12} (1961),  445-467.
\bibitem{18}
   W. Liu,  E. Van Vleck,  Turning points and traveling waves in FitzHugh-Nagumo type equations,
               \emph{J. Differential Equations},  {\bf 225} (2006),  381每410.
\bibitem{19}
   X. Zhang, Integrability of Dynamical
Systems: Algebra and Analysis, preprint.

\bibitem{20}
   Y. T. Christodoulides,  P. A. Damianous,  Darboux polynomial for Lotka-Valterra systems in three dimensions,
               \emph{J. Nonlinear Math. Phys.},  {\bf 16} (2009),  339每354.





}

\end{thebibliography}
\end{document}